\newtheorem{thm}{Theorem}[section]
\newtheorem{lemma}[thm]{Lemma}
\newtheorem{ex}[thm]{Example}
\newtheorem{prop}[thm]{Proposition}
\newtheorem{ques}[thm]{Question}
\newtheorem{cor}[thm]{Corollary}
\newtheorem{rem}[thm]{Remark}
\newtheorem{con}{Conjecture}
\def \N {\mathbb N}
\def \Z {\mathbb Z}
\def \R {\mathbb R}
\def\B {\mathcal B}
\numberwithin{equation}{section}
\begin{document}

	\baselineskip 14pt
	
	\title[]{Polynomial ergodic averages of measure-preserving systems acted by $\Z^{d}$}
	
	\author{Rongzhong Xiao}
	
	\address{School of Mathematical Sciences, University of Science and Technology of China, Hefei, Anhui, 230026, PR China}
	\email{xiaorz@mail.ustc.edu.cn}

	\subjclass[2020]{Primary: 37A05; Secondary: 37A30.}
	\keywords{Algebraic past, K-system, Pinsker $\sigma$-algebra, Polynomial ergodic averages, Pointwise convergence}

	 \begin{abstract}
	In this paper, we reduce pointwise convergence of polynomial ergodic averages of general measure-preserving system acted by $\Z^d$ to the case of  measure-preserving system acted by $\Z^d$ with zero entropy. As an application, we can build pointwise convergence of polynomial ergodic averages for $K$-system acted by $\Z^d$.
	\end{abstract}
		\maketitle
		
	\section{Introduction}
	Throughout the paper, let $(X,\B,\mu)$ be a Lebesgue space. Let $\Z[n]$ denote all polynomials with integer coefficients.

	Let $G$ be an infinte, countable, discrete group. A tuple $(X,\B,\mu,G)$ is a measure-preserving system if there exists a group homomorphism $\Pi:G\rightarrow \rm{MPT}(X,\B,\mu)$ where $\rm{MPT}(X,\B,\mu)$ denotes the group of invertible measure-preserving transformations of $(X,\B,\mu)$. We write $g$ for the measure-preserving transformation $\Pi(g)$. When $G=\Z$, we write $(X,\B,\mu,T)$ for measure-preserving system $(X,\B,\mu,G)$ where $T$ denotes  the measure-preserving transformation $\Pi(1)$. 
	
	In 1977, H. Furstenberg provided an ergodic theoretic proof for Szemer{\'e}di's theorem in \cite{F}. From then on, convergence of polynomial ergodic averages of measure-preserving system was a fundamental part of study of ergodic theory. Normally, we focus on the following question.
	\begin{ques}
		$($Furstenberg–Bergelson–Leibman conjecture \cite[Page 468]{BVLA}$)$Let $d,m\in \N$ be given. Let $T_1,\cdots,T_d:X\rightarrow X$ be a family of invertible measure-preserving transformations of $(X,\B,\mu)$ that generates a nilpotent group. Is it true that for any $p_{i,j}(n)\in \Z[n], 1\le i\le d, 1\le j\le m$ and for any $f_1,\cdots,f_m\in L^{\infty}(X,\B,\mu)$, $$\lim_{N\rightarrow \infty}\frac{1}{N}\sum_{n=0}^{N-1}\prod_{j=1}^{m}f_j(T_{1}^{p_{1,j}(n)}\cdots T_{d}^{p_{d,j}(n)}x)$$ exists in $L^{2}(\mu)$-norm or almost everywhere? 
	\end{ques}
	
	For $L^{2}(\mu)$-norm case, the question had been solved completely after some works. For $d=1$, H.Furstenberg and B. Weiss dealt with the form $\frac{1}{N}\sum_{n=0}^{N-1}f_1(T^{n^2}x)f_2(T^{n}x)$ in \cite{FW}. B. Host and B. Kra gave a answer for linear polynomials and described the structure of charateristic factors in \cite{HK}. Later, A. Leibman extend the result to general polynomials in \cite{Le}.
	For commute transformations, when $d=m=2$, J.-P. Conze and E. Lesigne constructed an answer on $L^{1}(\mu)$-norm for linear polynomials in \cite{CL}. T. Tao built the result for such form $\frac{1}{N}\sum_{n=0}^{N-1}\prod_{j=1}^{d}f_j(T_{j}^{n}x)$ in \cite{T}. T. Austin established same result for this form by pleasant extension in \cite{Au}.
	Finally, M. N. Walsh gave a complete answer for  measure-preserving system acted by nilpotent group in \cite{W}. 
	
	For pointwise case, there were some progressions for this conjecture over the last few decades. When $d=m=1$ or $d=1,m=2$ with $p_{1,1}(n)=an,p_{1,2}(n)=bn$ where $a$ and $b$ are non-zero integers and $a\neq b$, the question is solved by J. Bourgain in \cite{B2} and \cite{B4} respectively. For distal system acted by $\Z$, W. Huang, S. Shao and X. Ye answered it for linear polynomials in \cite{HSY}. For distal system acted by $\Z^d$ where $d>1$, S. Donoso and W. Sun established the similar result in \cite{DS}. Recently, when $d=1,m=2$ and $p_{1,1}(n)=n,\deg p_{1,2}(n)\ge 2$, B. Krause, M. Mirek and T. Tao constructed corresponding result in \cite{BMT}. To know more results, one can refer to \cite{A,B3,B1,IAMMS,L1,L2,N}.
	
	In 1996, J. Derrien and E. Lesigne reduced the pointwise convergence of polynomial ergodic averages of general measure-preserving system acted by $\Z$ to the case of  measure-preserving system acted by $\Z$ with zero entropy in \cite{DL}. Here, we state the result specifically. 
	 \begin{thm}
	 	Let $T:X\rightarrow X$ be an invertible measure-preserving transformations of $(X,\B,\mu)$. Given $m\in \N$, let $p_{j}(n)\in \Z[n], 1\le j\le m$. Let $P_{\mu}(T)$ be Pinsker $\sigma$-algebra$($for definition, see Subsection \ref{K}$)$ of measure-preserving system $(X,\B,\mu,T)$. Then for any $f_1,\cdots,f_m\in L^{\infty}(X,\B,\mu)$, $$\lim_{N\rightarrow \infty}\frac{1}{N}\sum_{n=0}^{N-1}\prod_{j=1}^{m}T^{p_{j}(n)}f_j$$ exists almost everywhere if and only if for any $h_1,\cdots,h_m\in L^{\infty}(X,P_{\mu}(T),\mu)$, $$\lim_{N\rightarrow \infty}\frac{1}{N}\sum_{n=0}^{N-1}\prod_{j=1}^{m}T^{p_{j}(n)}h_j$$ exists almost everywhere. 
	 \end{thm}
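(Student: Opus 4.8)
The ``only if'' direction is immediate, since $L^{\infty}(X,P_{\mu}(T),\mu)\subseteq L^{\infty}(X,\B,\mu)$. For the converse I would linearise: given $f_{1},\dots,f_{m}\in L^{\infty}(X,\B,\mu)$, write $f_{j}=\E(f_{j}\mid P_{\mu}(T))+f_{j}^{\perp}$ with $\E(f_{j}^{\perp}\mid P_{\mu}(T))=0$, and expand $\prod_{j}T^{p_{j}(n)}f_{j}$ into $2^{m}$ products. The product in which every factor lies in $L^{\infty}(X,P_{\mu}(T),\mu)$ gives, after averaging, an a.e.\ convergent sequence by hypothesis. Hence the whole theorem reduces to showing that $\tfrac1N\sum_{n=0}^{N-1}\prod_{j=1}^{m}T^{p_{j}(n)}f_{j}\to 0$ almost everywhere whenever $f_{1},\dots,f_{m}\in L^{\infty}(X,\B,\mu)$ and $\E(f_{j_{0}}\mid P_{\mu}(T))=0$ for some index $j_{0}$.

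To prove this I would first normalise — pull out the factors attached to constant polynomials, which only multiply the average by a fixed bounded function, so assume every $p_{j}$ is non-constant — and then exploit the structure of the Pinsker algebra through an \emph{algebraic past}. By the Rokhlin--Sinai theorem there is a sub-$\sigma$-algebra $\mathcal F\subseteq\B$ with $T^{-1}\mathcal F\subseteq\mathcal F$, $\bigvee_{k\ge0}T^{k}\mathcal F=\B$ and $\bigcap_{k\ge0}T^{-k}\mathcal F=P_{\mu}(T)$; writing $\mathcal F_{k}:=T^{-k}\mathcal F$ we get a decreasing family of $\sigma$-algebras with $\bigvee_{k\in\Z}\mathcal F_{k}=\B$ and $\bigcap_{k\in\Z}\mathcal F_{k}=P_{\mu}(T)$. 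Decompose $f_{j_{0}}=\sum_{k\in\Z}d_{k}$ in $L^{2}(\mu)$, $d_{k}:=\E(f_{j_{0}}\mid\mathcal F_{k-1})-\E(f_{j_{0}}\mid\mathcal F_{k})$; the telescoping runs from $\B$ down to $P_{\mu}(T)$, and the $P_{\mu}(T)$-endpoint contributes nothing precisely because $\E(f_{j_{0}}\mid P_{\mu}(T))=0$. Each $d_{k}$ is $\mathcal F_{k-1}$-measurable and orthogonal to $\mathcal F_{k}$ (hence to $P_{\mu}(T)\subseteq\mathcal F_{k}$), so $T^{p_{j_{0}}(n)}d_{k}$ sits in the thin slice $L^{2}(\mathcal F_{k-1-p_{j_{0}}(n)})\ominus L^{2}(\mathcal F_{k-p_{j_{0}}(n)})$, whose two $\sigma$-algebras — $p_{j_{0}}$ being non-constant — are pushed arbitrarily deep towards $\B$ (or $P_{\mu}(T)$, after, if needed, replacing $T$ by $T^{-1}$ or passing to subsequences to control the signs of leading coefficients). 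Consequently $T^{p_{j_{0}}(n)}d_{k}$ decorrelates from the remaining factors, and using this one derives, for each fixed $k$, that $\tfrac1N\sum_{n}T^{p_{j_{0}}(n)}d_{k}\prod_{j\ne j_{0}}T^{p_{j}(n)}f_{j}\to 0$ a.e., via a pointwise van der Corput / PET-type reduction that re-splits along $P_{\mu}(T)$ the $n$-dependent functions it creates and eventually applies Bourgain's pointwise polynomial ergodic theorem \cite{B2} to the one-variable averages that survive; the limit is identified as $0$ by comparison with the $L^{2}(\mu)$-limit, which vanishes because, by the $L^{2}$-theory \cite{HK,Le,W}, the Host--Kra--Leibman nilfactor is characteristic for these averages and, having zero entropy, lies inside $P_{\mu}(T)$, while $d_{k}\perp P_{\mu}(T)$. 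Finally, truncating $f_{j_{0}}=\sum_{|k|\le K}d_{k}+r_{K}$ with $\|r_{K}\|_{2}\to0$ and $\E(r_{K}\mid P_{\mu}(T))=0$, one treats the finite part term by term and shows that the tail contributes a.e.\ negligibly as $K\to\infty$.

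The crux — and the step I expect to fight hardest with — is this last passage, from $L^{2}$ (or weak) smallness to \emph{almost everywhere} smallness: no maximal inequality is known for general multiple polynomial averages, so one cannot simply combine ``convergence on a dense class'' with the Banach principle. The point of the hypothesis $\E(f_{j_{0}}\mid P_{\mu}(T))=0$ and of the algebraic past $\mathcal F$ is exactly to make it possible to reduce, via van der Corput, any average carrying a $P_{\mu}(T)$-orthogonal factor to expressions dominated by \emph{single-variable} Bourgain-type maximal functions, for which maximal inequalities are available; organising this — keeping track of the $n$-dependent functions created at each van der Corput step without destroying $P_{\mu}(T)$-orthogonality, and arranging the estimates so that the sum over the martingale index $k$ converges almost everywhere — is where the real work lies. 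It is also the step that, on passing from $\Z$ to $\Z^{d}$, forces one to replace $\mathcal F$ by a \emph{lexicographic} algebraic past adapted simultaneously to all the polynomials $p_{i,j}$.
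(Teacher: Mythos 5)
Your setup is sound: the necessity is indeed trivial, the reduction to showing that averages carrying a factor orthogonal to the Pinsker algebra vanish a.e.\ is the right reduction, and the idea of filtering $\B$ down to $P_{\mu}(T)$ by a Rokhlin--Sinai past $\{\mathcal F_k\}$ and decomposing $f_{j_0}$ into martingale differences is exactly the right geometric picture. But the step you yourself flag as the crux --- converting the decorrelation of $T^{p_{j_0}(n)}d_k$ from the other factors into \emph{almost everywhere} smallness via ``a pointwise van der Corput / PET-type reduction'' feeding into Bourgain's single-variable maximal inequality --- is a genuine gap, not a technical fight. Van der Corput is an $L^2$ (norm) device; there is no known pointwise version that reduces a multiple polynomial average to single-variable maximal functions, and identifying the $L^2$-limit as $0$ through the Host--Kra--Leibman characteristic factors gives you nothing pointwise precisely because, as you note, no maximal inequality for the multiple average is available. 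So the route you propose for the central estimate is blocked, and the Host--Kra input is in any case never needed.

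What the paper does instead, and what your sketch is missing, is an elementary probabilistic mechanism: after ordering the polynomial orbits along an algebraic past and arranging the telescoping so that the factors to the left of the distinguished index $j$ are raw $f_k$'s whose orbit times lie in the \emph{past} of $T^{p_j(n)}$, while the factors to the right are conditional expectations on $\mathcal A\subset P_{\mu}(T)$ (hence measurable for \emph{every} past $\sigma$-algebra), the summands $X_{j,n}$ of the average become, for large $n$, pairwise uncorrelated bounded random variables with $\E(X_{j,n}\mid\mathcal J_{j,n})=0$. The strong law of large numbers for orthogonal sequences (Lemma \ref{lem1}) then gives $\frac1N\sum_n X_{j,n}\to0$ a.e.\ directly --- no van der Corput, no characteristic factors. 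The maximal inequality (Theorem \ref{thm5}) is used only where it is actually available, namely on a \emph{single} function: to replace $\E(f_j\mid\mathcal A_{g_{j_0}})$ by $\E(f_j\mid\mathcal A)$ (the analogue of your tail $r_K$, which is salvageable the same way, by dominating the multiple average by the single-orbit maximal function of $|r_K|$ times $\prod_{j\ne j_0}\|f_j\|_\infty$), and, together with the closedness statement (Theorem \ref{thm6}), to pass from indicator functions to general $L^\infty$ functions. Two further points: your $2^m$-fold expansion loses the ordering that makes the orthogonality argument work (the paper's $m$-term telescoping, with the order of the $j$'s matched to the order of the polynomials in the algebraic past, is what makes each summand a martingale difference); and the paper never needs your stronger claim that $\E(f_{j_0}\mid P_{\mu}(T))=0$ forces the average to vanish --- it only conditions on the smaller $\sigma$-algebra $\mathcal A$ generated by the orbit of the given functions under the past, which Proposition \ref{prop1} places inside $P_{\mu}(T)$, and that weaker reduction suffices.
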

	In this paper, we extend the above result to measure-preserving system acted by $\Z^d$. That is,
	 \begin{thm}\label{T1}
	 	Let $d\in \N$ be given. Let $T_1,\cdots,T_d:X\rightarrow X$ be a family of invertible measure-preserving transformations of $(X,\B,\mu)$ that generates $\Z^d$. Given $m\in \N$, let $p_{i,j}(n)\in \Z[n], 1\le i\le d, 1\le j\le m$. Let $P_{\mu}(\Z^d)$ be Pinsker $\sigma$-algebra of measure-preserving system $(X,\B,\mu,\Z^d)$. Then for any $f_1,\cdots,f_m\in L^{\infty}(X,\B,\mu)$, $$\lim_{N\rightarrow \infty}\frac{1}{N}\sum_{n=0}^{N-1}\prod_{j=1}^{m}f_j(T_{1}^{p_{1,j}(n)}\cdots T_{d}^{p_{d,j}(n)}x)$$ exists almost everywhere if and only if for any $h_1,\cdots,h_m\in L^{\infty}(X,P_{\mu}(\Z^d),\mu)$, $$\lim_{N\rightarrow \infty}\frac{1}{N}\sum_{n=0}^{N-1}\prod_{j=1}^{m}h_j(T_{1}^{p_{1,j}(n)}\cdots T_{d}^{p_{d,j}(n)}x)$$ exists almost everywhere. 
	 \end{thm}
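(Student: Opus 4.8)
The ``only if'' implication is immediate, since $L^{\infty}(X,P_{\mu}(\Z^d),\mu)\subseteq L^{\infty}(X,\B,\mu)$, so the plan concerns ``if''. I would follow the strategy of Derrien and Lesigne \cite{DL}, with the Rokhlin--Sinai past of a single transformation replaced by an \emph{algebraic past} of the $\Z^d$-action --- this is where the keyword ``algebraic past'' enters. Write $S_{j}^{(n)}:=T_{1}^{p_{1,j}(n)}\cdots T_{d}^{p_{d,j}(n)}$ and $A_{N}(f_{1},\dots,f_{m}):=\frac1N\sum_{n=0}^{N-1}\prod_{j=1}^{m}f_{j}\circ S_{j}^{(n)}$. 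First come the standard reductions: one may assume $\|f_{j}\|_{\infty}\le1$; one groups the indices $j$ sharing the same vector polynomial $(p_{1,j},\dots,p_{d,j})$ (replacing the corresponding $f_{j}$'s by their product) and factors out the fixed bounded contribution of the constant vector polynomials, reducing to the case in which the vector polynomials are pairwise distinct and non-constant; and, writing $f_{j}=\E(f_{j}\mid P_{\mu}(\Z^d))+f_{j}^{\perp}$ and expanding $\prod_{j}f_{j}$, every summand other than $\prod_{j}\E(f_{j}\mid P_{\mu}(\Z^d))$ --- which is covered by the hypothesis --- contains a factor with vanishing conditional expectation over $P_{\mu}(\Z^d)$. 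So Theorem~\ref{T1} reduces to the following.

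\textbf{Main Lemma.} \emph{If the vector polynomials $(p_{1,j},\dots,p_{d,j})$, $1\le j\le m$, are pairwise distinct and non-constant, $\|f_{j}\|_{\infty}\le1$ for all $j$, and $\E(f_{1}\mid P_{\mu}(\Z^d))=0$, then $A_{N}(f_{1},\dots,f_{m})\to0$ almost everywhere.}

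To prove the Main Lemma I would first build the $\Z^d$-analogue of the classical structure theory. Fix a total order $\prec$ on $\Z^d$ compatible with the group law (equivalently a strictly decreasing chain of rational subspaces), chosen --- by selecting the top linear functional to be positive on the leading coefficient of $(p_{1,1},\dots,p_{d,1})$ --- so that $S_{1}^{(n)}\to+\infty$ in $\prec$; let $\mathcal A\subseteq\B$ be an associated \emph{perfect algebraic past}: $g\mathcal A\subseteq\mathcal A$ for $g\succeq0$, $\bigvee_{g\in\Z^d}g\mathcal A=\B$, and --- the $\Z^d$-version of the Rokhlin--Sinai theorem, which has to be established --- the remote past $\bigcap_{g\succ0}g^{-1}\mathcal A$ equal to $P_{\mu}(\Z^d)$. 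Equivalently, the extension $\B\to P_{\mu}(\Z^d)$ is relatively K along $\prec$; in particular it has relative mixing of all orders, and the finite-memory functions (measurable with respect to finitely many increments of the filtration $(g\mathcal A)_{g}$ and orthogonal to $P_{\mu}(\Z^d)$), together with $L^{\infty}(X,P_{\mu}(\Z^d),\mu)$, span a dense subspace of $L^{2}(\mu)$. The $L^{2}$ form of the Main Lemma, $\|A_{N}(f_{1},\dots,f_{m})\|_{L^{2}(\mu)}\to0$, is then routine: it follows either from the known $L^{2}$ theory (the characteristic factor of $A_{N}$ has zero entropy, hence is contained in $P_{\mu}(\Z^d)$, to which $f_{1}$ is orthogonal; see \cite{HK,Le,W,DS}) or, self-containedly, from a van der Corput / PET induction whose base case uses the relative mixing together with the elementary fact that, the vector polynomials being distinct and non-constant, $S_{i}^{(n)}(S_{j}^{(n)})^{-1}$ leaves every finite subset of $\Z^d$ for $i\ne j$ as $n\to\infty$.

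The main obstacle is to upgrade this $L^{2}$ decay to the required a.e. decay, since no maximal inequality for multiple polynomial averages is available; this is the technical heart of \cite{DL} and the place where the algebraic past is genuinely used. The plan is to combine three ingredients. (i) \emph{Localization}: approximate $f_{1}$ --- and, at the next stage, $f_{2},\dots,f_{m}$ --- in $L^{2}$ by finite-memory functions, so that for large $n$ all factors are carried by pairwise disjoint, far-separated pieces of the filtration. (ii) For each $L^{2}$-small error $e$ produced in (i), the crude bound $|A_{N}(\dots,e,\dots)|\le\bigl(\prod_{l}\|f_{l}\|_{\infty}\bigr)\frac1N\sum_{n}|e|\circ S_{k}^{(n)}$ together with Bourgain's \emph{single}-transformation polynomial maximal inequality \cite{B2} (in its form for a polynomial orbit inside a $\Z^d$-action) shows $\sup_{N}|A_{N}(\dots,e,\dots)|$ is small in measure. (iii) For the finite-memory main term, a direct estimate using the identification of the remote past with $P_{\mu}(\Z^d)$ and the K-property of the extension along $\prec$: the distinguished factor $f_{1}'\circ S_{1}^{(n)}$, orthogonal to $P_{\mu}(\Z^d)$ and carried by a piece escaping to $\prec$-infinity, decorrelates from the other (now finite-memory) factors fast enough --- via a martingale / level-decomposition argument --- to force $A_{N}(f_{1}',\dots,f_{m}')\to0$ a.e. Letting the approximation parameter tend to $0$ and unwinding the telescoping of the first paragraph then proves Theorem~\ref{T1}. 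I expect the genuinely new effort, beyond transcribing \cite{DL}, to be: (a) constructing the perfect algebraic past for $\Z^d$ and identifying its remote past with $P_{\mu}(\Z^d)$ (a $\Z^d$ Rokhlin--Sinai theorem relative to a chosen order), and (b) making the geometry of the several polynomial pieces and the level-decomposition estimate in (iii) work uniformly over $\Z^d$, in particular when the orbits $S_{j}^{(n)}$ are not $\prec$-comparable --- which is more delicate than in the $d=1$ case.
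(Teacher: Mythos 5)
Your outline follows the same Derrien--Lesigne blueprint as the paper (telescoping relative to a past, a martingale-type decorrelation for the distinguished factor, a maximal inequality to absorb $L^2$-small errors, and a closure argument to pass from a dense class to all of $L^{\infty}$), but two of your key inputs are not actually available in the form you invoke them, and the paper's proof is organized precisely so as not to need them. First, you condition on $P_{\mu}(\Z^d)$ itself and therefore need a full $\Z^d$ Rokhlin--Sinai theorem: a perfect algebraic past whose remote past \emph{equals} $P_{\mu}(\Z^d)$. The paper never proves or uses this. Instead it fixes the functions $f_1,\dots,f_m$ first, lets $\mathcal{A}_{g}$ be the $\sigma$-algebra generated by the translates $hf_l$ with $g\le_{\Phi}h$, sets $\mathcal{A}=\bigcap_{g}\mathcal{A}_{g}$, and telescopes against $\E(f_j|\mathcal{A})$ rather than $\E(f_j|P_{\mu}(\Z^d))$. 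Only the \emph{inclusion} $\mathcal{A}\subset P_{\mu}(\Z^d)$ is needed, so that the limiting product of conditional expectations falls under the hypothesis; that inclusion is the easy direction, obtained in Proposition \ref{prop1} from the entropy addition formula of \cite{HXY} (Theorem \ref{thm2}). Your route is not obviously wrong, but it places on you the burden of proving a $\Z^d$ Rokhlin--Sinai theorem relative to the chosen order, a substantial piece of entropy theory the paper deliberately avoids.

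Second, your error-absorption step (ii) leans on ``Bourgain's single-transformation polynomial maximal inequality (in its form for a polynomial orbit inside a $\Z^d$-action)''. For $d>1$ the relevant averages are $\frac1N\sum_{n}|e|(T_1^{p_{1,j}(n)}\cdots T_d^{p_{d,j}(n)}x)$ along a genuine polynomial curve in $\Z^d$, which is not the orbit of one transformation along one polynomial; Bourgain's theorem \cite{B2} does not cover it, and your parenthetical is exactly the missing theorem. The paper instead quotes the multidimensional maximal inequality of \cite{IAMMS} (Theorem \ref{thm5}), on which both the error estimate and the closure statement (Theorem \ref{thm6}) rest. Finally, the comparability issue you flag in (b) --- that the orbits $S_j^{(n)}$ need not be $\prec$-comparable --- is resolved in the paper rather than left open: one chooses weights $A_1,\dots,A_d>0$ so that $\sum_{i}A_ip_{i,j}(n)$ and $\sum_{i}A_i(p_{i,k}(n)-p_{i,l}(n))$ are non-constant, takes the associated weighted-lexicographic algebraic past of Example \ref{ex1}, and passes to arithmetic progressions $nQ+i$ so that each orbit is eventually strictly monotone along $<_{\Phi}$; the decorrelation then comes from the strong law for pairwise orthogonal bounded random variables (Lemma \ref{lem1}) applied to the resulting martingale differences, not from a rate of relative mixing. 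Filling these three points would essentially reconstruct the paper's argument.
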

	 
	 By the idea of the arguments of the proof of Theorem \ref{T1}, we can get the following result.
	 \begin{thm}
	 	Let $d\in \N$ be given. Let $T_1,\cdots,T_d:X\rightarrow X$ be a family of invertible measure-preserving transformations of $(X,\B,\mu)$ that generates $\Z^d$. Given $m\in \N$, let $p_{i,j}(n)\in \Z[n], 1\le i\le d, 1\le j\le m$. Let $g:\N\rightarrow \R$ be a bounded sequence and $-\infty<\lim_{N\rightarrow \infty}\frac{1}{N}\sum_{n=1}^{N}g(n)=a<\infty$. Let $P_{\mu}(\Z^d)$ be Pinsker $\sigma$-algebra of measure-preserving system $(X,\B,\mu,\Z^d)$. Then for any $f_1,\cdots,f_m\in L^{\infty}(X,\B,\mu)$, $$\lim_{N\rightarrow \infty}\frac{1}{N}\sum_{n=1}^{N}g(n)\prod_{j=1}^{m}f_j(T_{1}^{p_{1,j}(n)}\cdots T_{d}^{p_{d,j}(n)}x)$$ exists almost everywhere if and only if for any $h_1,\cdots,h_m\in L^{\infty}(X,P_{\mu}(\Z^d),\mu)$, $$\lim_{N\rightarrow \infty}\frac{1}{N}\sum_{n=1}^{N}g(n)\prod_{j=1}^{m}h_j(T_{1}^{p_{1,j}(n)}\cdots T_{d}^{p_{d,j}(n)}x)$$ exists almost everywhere.
	 \end{thm}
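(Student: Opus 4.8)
The ``only if'' direction is immediate, since $L^{\infty}(X,P_{\mu}(\Z^d),\mu)\subseteq L^{\infty}(X,\B,\mu)$. For ``if'', the plan is to rerun the proof of Theorem~\ref{T1} with the bounded weight $g(n)$ carried along throughout. Write $P_j(n):=(p_{1,j}(n),\dots,p_{d,j}(n))\in\Z^d$ and $T^{P_j(n)}:=T_1^{p_{1,j}(n)}\cdots T_d^{p_{d,j}(n)}$, which is well defined because the $T_i$ commute. First I would normalise: whenever $P_j-P_k$ is a constant vector, replace the pair of factors $T^{P_j(n)}f_j\cdot T^{P_k(n)}f_k$ by the single factor $T^{P_k(n)}\bigl(f_k\cdot T^{P_j-P_k}f_j\bigr)$; this keeps every function in $L^{\infty}$, changes neither average, and lets me assume the degree-$\ge 1$ parts of $P_1,\dots,P_m$ are pairwise distinct. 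Then decompose $f_j=h_j+f_j'$ with $h_j:=\E(f_j\mid P_{\mu}(\Z^d))\in L^{\infty}(X,P_{\mu}(\Z^d),\mu)$ and $\E(f_j'\mid P_{\mu}(\Z^d))=0$, and expand $\prod_j(h_j+f_j')$. The ``pure'' term $\prod_j T^{P_j(n)}h_j$ contributes a weighted Pinsker average, which converges a.e.\ by hypothesis; every other term contains some factor $T^{P_{j_0}(n)}f_{j_0}'$ with $\E(f_{j_0}'\mid P_{\mu}(\Z^d))=0$ (no primed factor could have been absorbed into a product of two primed functions during the normalisation, since two such $P_j,P_k$ would already have been merged there).

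It therefore suffices to prove the \emph{weighted vanishing lemma}: if $P_1,\dots,P_m$ have pairwise distinct degree-$\ge 1$ parts and $u_1,\dots,u_m\in L^{\infty}(X,\B,\mu)$ satisfy $\E(u_{j_0}\mid P_{\mu}(\Z^d))=0$ for some $j_0$, then
\[
\frac{1}{N}\sum_{n=1}^{N}g(n)\prod_{j=1}^{m}u_j\bigl(T^{P_j(n)}x\bigr)\longrightarrow 0\qquad\text{a.e.}
\]
This is the weighted form of the lemma underlying Theorem~\ref{T1}, which exploits the $\Z^d$ Rokhlin--Sinai structure set up above: $X$ is a relatively $K$ extension of its Pinsker factor $X/P_{\mu}(\Z^d)$ --- described through the algebraic past --- and hence is relatively mixing over $P_{\mu}(\Z^d)$. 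I would reprove it by: (i) a van der Corput / PET induction on the tuple $(P_j)$, each step attaching a further bounded weight of the shape $g(n+h)g(n)$, until one reaches a linear orbit $n\mapsto T^{Qn}v$ of a function with zero conditional expectation; (ii) at that base case, approximating $v$ in $L^{2}$ by functions $v_R$ measurable with respect to a finite window of the algebraic past, for which $\langle T^{Qk}v_R,v_R\rangle=0$ once $|k|$ exceeds the window (a consequence of the conditional independence built into the algebraic past), so that $\bigl\|\tfrac1N\sum_{n\le N}w(n)\,T^{Qn}v_R\bigr\|_{2}^{2}=O_{R}(N^{-1})$ for any bounded $w$; Borel--Cantelli along a lacunary subsequence then gives a.e.\ convergence to $0$ for $v_R$; and (iii) maximal inequalities to fill the gaps between lacunary times and to absorb the approximation error, obtained from
\[
\sup_{N}\Bigl|\frac1N\sum_{n\le N}g(n)\prod_{j}T^{P_j(n)}u_j\Bigr|\ \le\ \|g\|_{\infty}\,\sup_{N}\frac1N\sum_{n\le N}\prod_{j}\bigl|T^{P_j(n)}u_j\bigr|,
\]
whose right side is bounded, via H\"older's inequality, by a product of Bourgain-type polynomial maximal functions \cite{B2} of $|u_1|^{m},\dots,|u_m|^{m}$. (Only $\|g\|_{\infty}<\infty$ is used here; the condition $\frac1N\sum_{n\le N}g(n)\to a$ is in fact forced by either side of the equivalence, by testing against constant functions, which lie in $L^{\infty}(X,P_{\mu}(\Z^d),\mu)$, and it merely pins down the limit of the constant component of the pure term as $a$, parallel to the unweighted case.)

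The main obstacle I anticipate is steps (i)--(ii): verifying that multiplying in the bounded weight $g$, and then its van der Corput iterates $g(n+h_1)g(n+h_2)\cdots$, does not spoil the decorrelation estimates that force \emph{pointwise} convergence on the relative-$K$ part --- in particular, that the ``cores'' produced by the PET induction stay genuinely mean zero over $P_{\mu}(\Z^d)$ once the weight is attached, and that the quantitative a.e.\ argument (lacunary subsequence plus maximal bound) still closes in the presence of the weight. With the weighted vanishing lemma in hand, the decomposition above finishes the proof exactly as for Theorem~\ref{T1}.
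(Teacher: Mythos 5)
Your outer reduction is sound and matches the paper's: the ``only if'' direction is trivial, the condition on $\frac{1}{N}\sum_{n=1}^{N}g(n)$ is indeed forced by testing against constant functions, and everything comes down to a weighted vanishing statement for terms containing a factor with zero conditional expectation over (a sub-$\sigma$-algebra of) $P_{\mu}(\Z^d)$, with only $\|g\|_{\infty}<\infty$ used there. The gap is in how you propose to prove that vanishing statement. The paper never runs a van der Corput/PET induction, and for good reason: after one van der Corput step the functions appearing in the correlation averages are products of shifts of the original ones, and there is no mechanism guaranteeing that these retain zero conditional expectation over the Pinsker algebra; moreover, upgrading the resulting $L^{2}$ decay to \emph{pointwise} convergence at every stage of the induction is exactly the hard part (pointwise convergence of general multiple polynomial averages is open, which is why the theorem is only a reduction to the Pinsker factor). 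You flag this yourself as ``the main obstacle I anticipate,'' but it is not a technical verification to be postponed --- it is the entire content of the theorem, and your steps (i)--(ii) leave it unresolved.

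What the paper actually does (Lemma \ref{prop3}) avoids van der Corput entirely. One picks $A_1,\dots,A_d>0$ so that the polynomials $\sum_{i}A_i p_{i,j}(n)$ and their pairwise differences eventually have degree $\ge 1$, takes the algebraic past $\Phi$ of $\Z^d$ from Example \ref{ex1}, and forms the filtration $\mathcal{A}_g$ generated by $\{hf_l:h\ge_{\Phi}g\}$, with $\mathcal{A}=\bigcap_{g}\mathcal{A}_g\subset P_{\mu}(\Z^d)$ by Proposition \ref{prop1}. The product is then telescoped one index at a time, keeping raw functions to the left of index $j$ and $\mathcal{A}$-measurable ones to the right; after passing to arithmetic progressions $nQ+i$ so that consecutive orbit points $(p_{1,j}(nQ+i),\dots,p_{d,j}(nQ+i))$ are separated by at least $g_{j_0}$ in the $\Phi$-order, the summands $X_{j,n}$ become a uniformly bounded orthogonal sequence with $\mathbb{E}(X_{j,n}\mid\mathcal{J}_{j,n})=0$, and Lemma \ref{lem1} (the strong law for orthogonal random variables) gives a.e.\ convergence to $0$; the martingale approximation error $\mathbb{E}(f_j\mid\mathcal{A}_{g_{j_0}})-\mathbb{E}(f_j\mid\mathcal{A})$ is absorbed by Theorem \ref{thm4} and the maximal inequality of Theorem \ref{thm5}. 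The bounded weight rides along for free in this scheme: $g(n)X_{j,n}$ is still orthogonal, mean zero and uniformly $L^{2}$-bounded, and the error term only picks up a factor $\|g\|_{\infty}$. If you replace your steps (i)--(ii) by this orthogonality argument, your decomposition closes; as written, the proof is incomplete at its crux.
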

	 
	 By the remark after proof of Theorem \ref{T1} and Theorem \ref{T1}, we can get the following result.
	 \begin{cor}
	 	Let $d\in \N$ be given. Let $T_1,\cdots,T_d:X\rightarrow X$ be a family of invertible measure-preserving transformations of $(X,\B,\mu)$ that generates $\Z^d$. Given $m\in \N$, let $p_{i,j}(n)\in \Z[n], 1\le i\le d, 1\le j\le m$. If $(X,\B,\mu,\Z^d)$ is a $K$-system$($for definition, see Subsection \ref{K}$)$, then for any $f_1,\cdots,f_m\in L^{\infty}(X,\B,\mu)$, $$\lim_{N\rightarrow \infty}\frac{1}{N}\sum_{n=0}^{N-1}\prod_{j=1}^{m}f_j(T_{1}^{p_{1,j}(n)}\cdots T_{d}^{p_{d,j}(n)}x)$$ exists almost everywhere. In particular, if for each $1\le j\le m$, $T_{1}^{p_{1,j}(n)}\cdots T_{d}^{p_{d,j}(n)}$ is not a constant and for any $1\le k,l\le m$ with $k\neq l$, $T_{1}^{p_{1,k}(n)-p_{1,l}(n)}\cdots T_{d}^{p_{d,k}(n)-p_{d,l}(n)}$ is not a constant, then the limit function must be $\prod_{j=1}^{m}\int f_jd\mu$.
	 \end{cor}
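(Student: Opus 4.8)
The plan is to deduce the almost-everywhere convergence directly from Theorem~\ref{T1}, using that for a $K$-system the target class of functions collapses to constants, and then to pin down the value of the limit by comparing it with the $L^2$-norm limit of the same averages. For the first assertion: since $(X,\B,\mu,\Z^d)$ is a $K$-system, its Pinsker $\sigma$-algebra $P_{\mu}(\Z^d)$ is trivial modulo $\mu$, so $L^{\infty}(X,P_{\mu}(\Z^d),\mu)$ consists of $\mu$-a.e.\ constant functions. If $h_j\equiv c_j$ are such constants, then $h_j(T_{1}^{p_{1,j}(n)}\cdots T_{d}^{p_{d,j}(n)}x)=c_j$ for $\mu$-a.e.\ $x$ and all $n$, so the average $\frac1N\sum_{n=0}^{N-1}\prod_{j=1}^{m}h_j(T_{1}^{p_{1,j}(n)}\cdots T_{d}^{p_{d,j}(n)}x)$ equals $\prod_{j=1}^{m}c_j$ for every $N$ and hence converges trivially almost everywhere. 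The remark following the proof of Theorem~\ref{T1}, together with Theorem~\ref{T1} itself, then transfers this to arbitrary $f_1,\dots,f_m\in L^{\infty}(X,\B,\mu)$, yielding the asserted a.e.\ convergence of $\frac1N\sum_{n=0}^{N-1}\prod_{j=1}^{m}f_j(T_{1}^{p_{1,j}(n)}\cdots T_{d}^{p_{d,j}(n)}x)$.

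For the ``in particular'' part, assume the non-degeneracy hypotheses. A $K$-system is weakly mixing --- its Kronecker factor is a zero-entropy factor, hence contained in the trivial Pinsker factor --- and for weakly mixing commuting actions the $L^{2}(\mu)$-limit of these polynomial averages equals $\prod_{j=1}^{m}\int f_j\,d\mu$ precisely when no $T_{1}^{p_{1,j}(n)}\cdots T_{d}^{p_{d,j}(n)}$ and no $T_{1}^{p_{1,k}(n)-p_{1,l}(n)}\cdots T_{d}^{p_{d,k}(n)-p_{d,l}(n)}$ with $k\ne l$ is constant; this is the Bergelson--Leibman weak-mixing theorem (equivalently, Walsh's $L^{2}$-convergence theorem together with the triviality of the characteristic factor of a weakly mixing system). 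On the other hand the averages are bounded in modulus by $\prod_{j=1}^{m}\|f_j\|_{\infty}$, so the a.e.\ convergence established above upgrades, by dominated convergence, to convergence in $L^{2}(\mu)$ to the same limit function. A sequence has at most one $L^{2}$-limit, so the a.e.\ limit must coincide with $\prod_{j=1}^{m}\int f_j\,d\mu$.

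The only substantial ingredient is the $L^{2}$ mixing statement with the exact non-degeneracy conditions stated in the corollary. If one prefers a self-contained derivation rather than quoting Bergelson--Leibman, the main obstacle is the van der Corput / PET-induction estimate showing that the $L^{2}(\mu)$-norm of the average tends to $0$ as soon as one factor is replaced by a mean-zero function --- which is exactly where the hypothesis that no difference $T_{1}^{p_{1,k}(n)-p_{1,l}(n)}\cdots T_{d}^{p_{d,k}(n)-p_{d,l}(n)}$ is constant gets consumed. The remaining steps --- triviality of $P_{\mu}(\Z^d)$, the passage from a.e.\ to $L^{2}$ convergence, and the appeal to Theorem~\ref{T1} --- are routine.
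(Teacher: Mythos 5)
Your treatment of the first assertion is correct and is exactly the paper's route: triviality of $P_{\mu}(\Z^d)$ makes every admissible $h_j$ constant, the averages over $L^{\infty}(X,P_{\mu}(\Z^d),\mu)$ converge trivially, and the sufficiency direction of Theorem \ref{T1} transfers a.e.\ convergence to all of $L^{\infty}(X,\B,\mu)$.

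The identification of the limit, however, rests on a false intermediate claim. The assertion that ``for weakly mixing commuting actions the $L^{2}$-limit of these polynomial averages equals $\prod_{j}\int f_j\,d\mu$ under the stated non-degeneracy conditions'' is Bergelson's PET theorem only for $d=1$ (powers of a single weakly mixing $T$); it fails for genuinely commuting transformations, and the characteristic factor of a weakly mixing $\Z^d$-action for such averages is not trivial in general. Take $X=Y\times Y$, $\mu=\nu\times\nu$, $T_1=S\times\mathrm{id}$, $T_2=\mathrm{id}\times S$ with $(Y,\nu,S)$ weakly mixing. The $\Z^2$-action is weakly mixing and $T_1^{n}$, $T_2^{n}$, $T_1^{n}T_2^{-n}$ are all non-constant, yet for $f_1=g\otimes h$ and $f_2=u\otimes v$ one has
\begin{equation*}
\lim_{N\to\infty}\frac1N\sum_{n=0}^{N-1}f_1(T_1^{n}x)f_2(T_2^{n}x)=u(y_1)\,h(y_2)\int g\,d\nu\int v\,d\nu
\end{equation*}
in $L^2(\mu)$, which is not $\int f_1\,d\mu\int f_2\,d\mu$ unless $u$ and $h$ are constant. (This action has zero entropy, so it does not contradict the corollary, but it shows that weak mixing of the $\Z^d$-action is far weaker than the $K$ property and cannot carry the identification of the limit.) Walsh's theorem gives $L^2$-convergence but says nothing about the value of the limit, so the ``only substantial ingredient'' you defer to is precisely the ingredient that is not available in the literature in the generality you need.

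The paper identifies the limit without any external mixing theorem: in Lemma \ref{prop3} the a.e.\ limit for indicator functions is exhibited as the limit of $\frac1N\sum_{n}\prod_{j}\mathbb{E}(f_j|\mathcal{A})(T_{1}^{p_{1,j}(n)}\cdots T_{d}^{p_{d,j}(n)}x)$ with $\mathcal{A}\subset P_{\mu}(\Z^d)$; in a $K$-system $\mathcal{A}$ is trivial, so this is $\prod_{j}\mu(A_j)$ on the nose, and Remark \ref{rm1} passes to general $f_j\in L^{\infty}$ by simple-function approximation combined with the maximal inequality of Theorem \ref{thm5}. The non-degeneracy hypotheses are consumed in setting up the algebraic past for Lemma \ref{prop3}, not in any weak mixing argument. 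Your passage from a.e.\ convergence plus uniform boundedness to $L^2$-convergence is fine, but to use it you would still need to compute the $L^2$-limit from the $K$ property itself, which brings you back to the paper's argument.
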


	In the above, the polynomial ergodic averages go along the positive integers. Next, we want to consider the polynomial ergodic averages that go along the prime numbers. That is, $$\frac{1}{N}\sum_{n=0}^{N-1}\prod_{j=1}^{m}f_j(T_{1}^{p_{1,j}(a_n)}\cdots T_{d}^{p_{d,j}(a_n)}x)$$ where $\mathbb{P}=\{a_0<a_1<\cdots<a_n<\cdots\}$ consists of all prime numbers. 
	
	Here, we build a result for a class of special cases.
	
	\begin{thm}\label{T2}
		Let $d\in \N$ be given. Let $T_1,\cdots,T_d:X\rightarrow X$ be a family of invertible measure-preserving transformations of $(X,\B,\mu)$ that generates $\Z^d$. Given $m\in \N$, let $p_j(n)\in \Z[n],1\le j\le m$. Let $g:\{1,\cdots,m\}\rightarrow \{1,\cdots,d\}$. Let $P_{\mu}(\Z^d)$ be Pinsker $\sigma$-algebra of measure-preserving system $(X,\B,\mu,\Z^d)$. Then for any $f_1,\cdots,f_m\in L^{\infty}(X,\B,\mu)$, $$\lim_{N\rightarrow \infty}\frac{1}{N}\sum_{n=0}^{N-1}\prod_{j=1}^{m}f_j(T_{g(j)}^{p_{j}(a_n)}x)$$ exists almost everywhere if and only if for any $h_1,\cdots,h_m\in L^{\infty}(X,P_{\mu}(\Z^d),\mu)$, $$\lim_{N\rightarrow \infty}\frac{1}{N}\sum_{n=0}^{N-1}\prod_{j=1}^{m}h_j(T_{g(j)}^{p_{j}(a_n)}x)$$ exists almost everywhere where $\mathbb{P}=\{a_0<a_1<\cdots<a_n<\cdots\}$ consists of all prime numbers.
	\end{thm}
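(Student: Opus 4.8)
The plan is to follow the proof of Theorem~\ref{T1} with the sequence of primes in place of the integers, and I expect only two genuinely new inputs to be required.

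The ``only if'' direction is immediate, since $L^\infty(X,P_\mu(\Z^d),\mu)\subseteq L^\infty(X,\B,\mu)$. For the ``if'' direction, given $f_1,\dots,f_m\in L^\infty(X,\B,\mu)$ with $\|f_j\|_\infty\le1$, I would write $f_j=h_j+f_j^\perp$ with $h_j=\E(f_j\mid P_\mu(\Z^d))$ and $\E(f_j^\perp\mid P_\mu(\Z^d))=0$ and expand $\prod_{j=1}^{m}(h_j+f_j^\perp)(T_{g(j)}^{p_j(a_n)}x)$ into $2^m$ products. The product whose factors are exactly the $h_j$'s converges a.e.\ by hypothesis, so it suffices to show that
$$\frac1N\sum_{n=0}^{N-1}\prod_{j=1}^{m}\phi_j(T_{g(j)}^{p_j(a_n)}x)\longrightarrow 0\quad\text{a.e.}$$
whenever $\phi_1,\dots,\phi_m\in L^\infty(X,\B,\mu)$ are bounded by $1$ and $\E(\phi_{j_0}\mid P_\mu(\Z^d))=0$ for some index $j_0$. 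First I would check $L^2$-convergence to $0$: rewriting the average over the first $N$ primes as an average over $\Z$ weighted by the von Mangoldt function and splitting that weight into major-arc and minor-arc parts, the minor-arc part is $L^2$-negligible because it is small in the uniformity seminorms that control the average, while the major-arc part reduces to ordinary polynomial averages of the $\phi_j$ along arithmetic progressions, whose $L^2$-limit is measurable with respect to a factor generated by nilsystems and hence with respect to $P_\mu(\Z^d)$; since $\E(\phi_{j_0}\mid P_\mu(\Z^d))=0$, the $L^2$-limit is $0$.

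To upgrade this to a.e.\ convergence I would use the \emph{algebraic past} of the $\Z^d$-system. Since the extension $(X,\B,\mu,\Z^d)\to(X,P_\mu(\Z^d),\mu,\Z^d)$ has completely positive relative entropy, there is a ``past'' $\sigma$-algebra $\mathcal A$, attached to a total order on $\Z^d$ compatible with addition, with $P_\mu(\Z^d)\subseteq\mathcal A\subseteq\B$, whose $\Z^d$-translates generate $\B$ and whose past tail is $P_\mu(\Z^d)$. After approximating the $\phi_j$ by functions measurable over a common translate of $\mathcal A$, I would run a van der Corput / oscillation induction (as in the proof of Theorem~\ref{T1}) on the complexity of the family $\{p_j\}$, reducing each error term either to a lower-complexity average or to one in which a factor orthogonal to $P_\mu(\Z^d)$ has been replaced by its conditional expectation over a deep tail of $\mathcal A$, which tends to $0$. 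These contributions are summed against a maximal function for the prime polynomial averages of the remaining factors, and this maximal function is finite a.e.\ because --- each $\phi_j$ involving only the single transformation $T_{g(j)}$ --- it is dominated by one-transformation polynomial-along-primes maximal operators (Bourgain, Wierdl, Nair).

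The hard part will be this last step, the conversion of $L^2$-decay of the error terms into a.e.-decay along primes. Two points go beyond the integer case of Theorem~\ref{T1}: a maximal (and oscillation) inequality for prime polynomial averages must be kept available throughout the van der Corput induction --- which forces one to check that the difference polynomials it produces are again of the single-transformation type, exactly what the hypothesis that each $f_j$ is acted on by a single $T_{g(j)}$ guarantees, and the reason Theorem~\ref{T2} is restricted to this class; and the minor-arc contribution must be discarded through a pointwise circle-method decomposition rather than a merely $L^2$ one. The remaining ingredients --- the splitting over the Pinsker factor, the multilinear expansion, and the identification of the characteristic factors --- should transfer from the proof of Theorem~\ref{T1} essentially unchanged.
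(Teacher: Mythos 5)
Your overall frame (reduce to the Pinsker factor by killing the components orthogonal to it, exploit the single-transformation form $T_{g(j)}^{p_j(a_n)}$ to have a maximal inequality along primes available, and bring in an algebraic past of $\Z^d$) matches the paper's, and your observation about why the theorem is restricted to this class is exactly right. But the engine you propose for the key vanishing step is not the paper's and, as written, has a genuine gap. The paper never passes through an $L^2$ statement and never invokes the circle method, von Mangoldt weights, major/minor arcs, uniformity seminorms, nilsystem characteristic factors, or a van der Corput induction --- and the proof of Theorem \ref{T1} that you propose to imitate contains no van der Corput step either. For several commuting transformations the characteristic factors of these multilinear averages are not known to be built from nilsystems, so your assertion that the major-arc contribution has an $L^2$-limit measurable with respect to a nilfactor contained in $P_{\mu}(\Z^d)$ is unsupported; likewise a pointwise major/minor-arc decomposition and oscillation inequalities for these multilinear prime polynomial averages are not available results you could cite. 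Both halves of your two-step plan ($L^2$ decay, then an a.e. upgrade) therefore rest on inputs that do not exist.

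What the paper actually does (Lemma \ref{prop2}) is elementary and purely probabilistic. Choose weights $A_1,\cdots,A_d>0$ so that the polynomials $A_{g(j)}p_j(n)$ and their pairwise differences eventually have degree at least $1$; this yields an algebraic past $\Phi$ of $\Z^d$ (Example \ref{ex1}) along which the points $p_j(a_n)\vec{e}_{g(j)}$ are eventually strictly $<_{\Phi}$-monotone --- the only property of the primes used is that $a_n$ is strictly increasing. Set $\mathcal{A}_{g}$ to be the $\sigma$-algebra generated by $\{hf_l:\ g\le_{\Phi}h\}$ and $\mathcal{A}=\bigcap_{g}\mathcal{A}_{g}$, so that $\mathcal{A}\subset P_{\mu}(\Z^d)$ by Proposition \ref{prop1}. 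In the telescoping decomposition of $\prod_{j}f_j-\prod_{j}\mathbb{E}(f_j|\mathcal{A})$, each summand $X_{j,n}$ (after passing to an arithmetic progression $nQ+i$ and replacing $\mathbb{E}(f_j|\mathcal{A})$ by $\mathbb{E}(f_j|\mathcal{A}_{g_{j_0}})$, an error controlled by Theorem \ref{thm4} together with the maximal inequality of Theorem \ref{thm1}) is measurable with respect to $\mathcal{J}_{j,n-1}$ or $\mathcal{J}_{j,n+1}$ while $\mathbb{E}(X_{j,n}|\mathcal{J}_{j,n})=0$; hence the $X_{j,n}$ are pairwise uncorrelated and bounded, and Lemma \ref{lem1} (the strong law for orthogonal random variables) gives a.e. convergence to $0$ directly, with no intermediate $L^2$ statement. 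The passage from simple functions to general $L^{\infty}$ functions then uses the closedness statement Theorem \ref{thm3}. If you replace your circle-method step by this orthogonality argument, the rest of your outline goes through.
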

	
	As for general case, we expect the following result.
	\begin{con}\label{con2}
		Under assumption of Theorem \ref{T1}, then for any $f_1,\cdots,f_m\in L^{\infty}(X,\B,\mu)$, $$\lim_{N\rightarrow \infty}\frac{1}{N}\sum_{n=0}^{N-1}\prod_{j=1}^{m}f_j(T_{1}^{p_{1,j}(a_n)}\cdots T_{d}^{p_{d,j}(a_n)}x)$$ exists almost everywhere if and only if for any $h_1,\cdots,h_m\in L^{\infty}(X,P_{\mu}(\Z^d),\mu)$, $$\lim_{N\rightarrow \infty}\frac{1}{N}\sum_{n=0}^{N-1}\prod_{j=1}^{m}h_j(T_{1}^{p_{1,j}(a_n)}\cdots T_{d}^{p_{d,j}(a_n)}x)$$ exists almost everywhere where $\mathbb{P}=\{a_0<a_1<\cdots<a_n<\cdots\}$ consists of all prime numbers.
	\end{con}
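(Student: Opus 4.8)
The plan is to reproduce the architecture of the proof of Theorem \ref{T1} — reducing the stated equivalence to a single vanishing statement — and then to transfer that statement from the integers to the primes using standard analytic number theory. First observe that the ``only if'' implication is immediate: since $L^{\infty}(X,P_{\mu}(\Z^d),\mu)\subseteq L^{\infty}(X,\B,\mu)$, convergence of the full prime average for all bounded $f_j$ specializes, upon taking $f_j=h_j$, to convergence of the $P_{\mu}(\Z^d)$-restricted average. Thus all the content lies in the ``if'' implication, for which — exactly as in the proof of Theorem \ref{T1} — it suffices to establish the following vanishing statement: writing $Pf:=\E[f\mid P_{\mu}(\Z^d)]$ and expanding each $f_j=Pf_j+(f_j-Pf_j)$, every resulting term in which at least one factor $f_{j_0}$ satisfies $\E[f_{j_0}\mid P_{\mu}(\Z^d)]=0$ obeys
\[
\lim_{N\to\infty}\frac1N\sum_{n=0}^{N-1}\prod_{j=1}^{m}f_j\bigl(T_1^{p_{1,j}(a_n)}\cdots T_d^{p_{d,j}(a_n)}x\bigr)=0\qquad\text{a.e.}
\]
As in the integer setting, the role of $P_{\mu}(\Z^d)$, through the algebraic-past filtration of the associated $\Z^d$-K-system, is precisely to render such orthogonal factors disjoint from the structured dynamics, which is what powers the vanishing.

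Next I would compare the average over the first $N$ primes with a von Mangoldt-weighted average over an initial interval of integers. Setting $F(n)=\prod_{j}f_j(T_1^{p_{1,j}(n)}\cdots T_d^{p_{d,j}(n)}x)$, partial summation together with the prime number theorem shows that controlling $\frac1N\sum_{n<N}F(a_n)$ is equivalent, up to normalization, to controlling $\frac1X\sum_{n\le X}\Lambda(n)F(n)$. After the $W$-trick (restricting $n$ to a fixed residue class modulo $W=\prod_{p\le w}p$ and letting $w\to\infty$) I would invoke the Green--Tao--Ziegler decomposition of the truncated, $W$-tricked von Mangoldt function into a major-arc piece $\Lambda^{\sharp}$, essentially constant on the relevant progressions, and a minor-arc piece $\Lambda^{\flat}$ of small Gowers uniformity norm.

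For the major-arc contribution, restricting to a residue class $n\equiv r\pmod{q}$ replaces each exponent $p_{i,j}(n)$ by $p_{i,j}(qn+r)$, again an element of $\Z[n]$, so the weighted average becomes a finite combination of ordinary polynomial ergodic averages of the same shape with the \emph{same} functions $f_j$. Since the condition $\E[f_{j_0}\mid P_{\mu}(\Z^d)]=0$ is a property of $f_{j_0}$ alone and is unaffected by the substitution, Theorem \ref{T1} applies to each such integer average and yields convergence to $0$ almost everywhere; summing over the boundedly many residue classes makes the entire major-arc contribution vanish a.e.

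The decisive difficulty is the minor-arc contribution $\frac1X\sum_{n\le X}\Lambda^{\flat}(n)F(n)$. What one wants is a \emph{pointwise} estimate asserting that a weight of small Gowers norm cannot build up almost-everywhere correlation with the multidimensional polynomial orbit $T_1^{p_{1,j}(n)}\cdots T_d^{p_{d,j}(n)}$, which, combined with a maximal (or oscillation/variational) inequality for the prime-polynomial averages, would close the argument and promote the estimate to genuine a.e. convergence. For the single-generator iterates $T_{g(j)}^{p_j(a_n)}$ of Theorem \ref{T2} this minor-arc term collapses to essentially one-parameter prime averages, which are governed by the available pointwise prime ergodic theorems and Wiener--Wintner-type uniformity — this is exactly what makes that case tractable. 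For general multi-generator iterates no comparable pointwise minor-arc bound is presently known; equivalently, there is no pointwise polynomial Szemer\'edi-type theorem along the primes with arbitrary $\Z^d$-valued exponents, and supplying it together with the matching maximal inequality is the main obstacle separating Theorem \ref{T2} from the full conjecture.
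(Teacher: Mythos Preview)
The statement is a \emph{conjecture} in the paper, not a theorem; there is no proof to compare against. What the paper does offer, immediately after stating the conjecture, is a diagnosis of the single missing ingredient: a maximal inequality of the shape of Theorem~\ref{thm5} but for prime-indexed iterates, i.e.\ a bound
\[
\Bigl\|\sup_{N\ge1}\Bigl|\frac{1}{N}\sum_{n=0}^{N-1}f\bigl(T_1^{p_1(a_n)}\cdots T_d^{p_d(a_n)}x\bigr)\Bigr|\Bigr\|_p\le C\|f\|_p.
\]
Given such an inequality, the filtration/orthogonality argument of Lemma~\ref{prop2} goes through verbatim for the general multi-generator case, because (as the remark after Theorem~\ref{T2} stresses) the only property of the primes used there is that the sequence is strictly increasing; the maximal inequality is needed solely at the step where one passes from $\mathcal{A}_{g_{j_0}}$ to $\mathcal{A}$ via Theorem~\ref{thm4}.

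Your route via the von Mangoldt weight and the Green--Tao--Ziegler decomposition is genuinely different and, as you yourself concede, does not close. It also introduces a \emph{second} obstacle --- a pointwise minor-arc estimate --- on top of the maximal inequality, whereas the paper's approach isolates only one. Moreover, your major-arc step is not self-contained: sending $w\to\infty$ in the $W$-trick and upgrading the per-residue-class a.e.\ convergence to a uniform statement already requires exactly the kind of prime maximal inequality you are trying to avoid. Finally, your appeal to ``Theorem~\ref{T1}'' for the vanishing on each residue class is imprecise: Theorem~\ref{T1} is an equivalence, not a vanishing statement, and the vanishing you need is the content of Lemma~\ref{prop3}, which is proved for the specific tail $\sigma$-algebra $\mathcal{A}$ built from the given functions, not directly for $P_\mu(\Z^d)$. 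In short, the paper's assessment --- one missing maximal inequality --- is both simpler and sharper than the two-obstacle picture your decomposition produces.
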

	Based on the proof of Theorem \ref{T2} and Theorem \ref{T1}, if one can build maximal inequality like the form mentioned in the Theorem \ref{thm5} for prime sequence, then there exists an affirmative answer for the Conjecture \ref{con2}.
	
	The proofs of Theorem \ref{T1} and Theorem \ref{T2} can be divided into two steps. At first, we reduce the general essentially bounded measurable functions to indicator functions by maximal inequality$($Theorem \ref{thm5} and Theorem \ref{thm1}$)$. Secondly, we use the structure of Pinsker $\sigma$-algebra$($Proposition \ref{prop1}$)$ and probability theoretic method to reach our conclusion. The key points are how to choose a proper algebraic past of $\Z^d$ and using the algebraic past to describe structure of Pinsker $\sigma$-algebra.
	 
	\subsection*{Organization of the paper}Section 2 contains some prepared notions and results. In section 3, we prove Theorem \ref{T1}. In section 4, we show Theorem \ref{T2}.

	\section{Preliminaries}
In this section, we review some notions and some results that will be used in later proof. 
	
	\subsection{Conditional expectation}
	Let $\mathcal{A}$ be a sub-$\sigma$-algebra of $\B$. For any $f\in L^{1}(X,\B,\mu)$, there exists a function $\mathbb{E}(f|\mathcal{A})$ that satisfies the following properties: (1)$\mathbb{E}(f|\mathcal{A})$ is $\mathcal{A}$-measurable; (2)for any $A\in \mathcal{A}$, $\int _{A}fd\mu=\int _{A}\mathbb{E}(f|\mathcal{A})d\mu$. Clearly, $\mathbb{E}(f|\mathcal{A})$ is characterized almost everywhere. The map $$\mathbb{E}(\cdot|\mathcal{A}):L^{1}(X,\B,\mu)\rightarrow L^{1}(X,\mathcal{A},\mu)$$ is called conditional expectation. In particular, when $\mathcal{A}$ is trivival, we write $\mathbb{E}f$ for $\mathbb{E}(f|\mathcal{A})$ for any $f\in L^{1}(X,\B,\mu)$.
	
	\begin{thm}\label{thm4}
		$($\cite[Theorem 5.8]{EW}$)$If $\{\mathcal{A}_n\}_{n\ge1}$ is a sequence of sub-$\sigma$-algebra of $\B$ with $\mathcal{A}_1\subset\mathcal{A}_2\subset\cdots\subset\mathcal{A}_n\subset\cdots$ and $\bigcap_{n\ge 1}\mathcal{A}_n=\mathcal{A}$, then for any $f\in L^{1}(X,\B,\mu)$, $$\mathbb{E}(f|\mathcal{A}_n)\rightarrow \mathbb{E}(f|\mathcal{A}) $$ almost everywhere and in $L^{1}(\mu)$ as $n\rightarrow\infty$.
	\end{thm}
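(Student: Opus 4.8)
The plan is to read the statement as the martingale convergence theorem and to prove it by the classical route: first extract an almost-everywhere limit from Doob's upcrossing inequality, then upgrade it to $L^{1}$ convergence, and finally identify the limit as $\E(f\mid\mathcal A)$. (As literally written the hypotheses give $\mathcal A=\mathcal A_1$; the content is the decreasing version $\mathcal A_1\supseteq\mathcal A_2\supseteq\cdots$, $\bigcap_{n\ge 1}\mathcal A_n=\mathcal A$ — i.e.\ Doob's reverse martingale theorem — and, equivalently, the increasing version with $\mathcal A=\sigma\bigl(\bigcup_n\mathcal A_n\bigr)$; both are treated identically below, only the last step differing slightly, so I describe the decreasing case in detail.) Splitting $f=f^{+}-f^{-}$, I may assume $f\ge 0$. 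Put $g_n:=\E(f\mid\mathcal A_n)$. By the tower property together with the nesting of the $\mathcal A_n$, $(g_n)_{n\ge 1}$ is a (reverse) martingale for the filtration $(\mathcal A_n)$; moreover, being the conditional expectations of the single function $f\in L^{1}(X,\B,\mu)$, the family $\{g_n:n\ge 1\}$ is uniformly integrable.

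The almost-everywhere convergence is the step with real content. For rationals $a<b$ let $U_N[a,b]$ be the number of upcrossings of $[a,b]$ performed by the finite sequence $g_1,\dots,g_N$. Doob's upcrossing inequality — proved by an optional-stopping (``gambling strategy'') argument, applied here to the martingale obtained by reversing time in $(g_n)$ — yields a bound of the form $(b-a)\,\E\bigl(U_N[a,b]\bigr)\le\E|f|+C(a,b)$, uniformly in $N$, with $C(a,b)$ depending only on $a$ and $b$. Letting $N\to\infty$ via monotone convergence, each rational interval is upcrossed only finitely often $\mu$-a.e., so $g:=\lim_{n\to\infty}g_n$ exists in $[-\infty,\infty]$ almost everywhere; Fatou's lemma gives $\E|g|\le\liminf_n\E|g_n|\le\E|f|<\infty$, so $g$ is finite a.e.\ and lies in $L^{1}(\mu)$. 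Combining almost-everywhere convergence with the uniform integrability above, Vitali's convergence theorem upgrades this to $g_n\to g$ in $L^{1}(\mu)$.

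It remains to identify $g$. In the decreasing case, for every $k$ one has $g=\lim_{n\ge k}g_n$, which is measurable with respect to $\sigma(g_n:n\ge k)\subseteq\mathcal A_k$; hence $g$ is $\bigcap_{k\ge1}\mathcal A_k=\mathcal A$-measurable. For any $A\in\mathcal A$ we have $A\in\mathcal A_n$ for all $n$, so the $L^{1}$ convergence gives
$$\int_A g\,d\mu=\lim_{n\to\infty}\int_A g_n\,d\mu=\lim_{n\to\infty}\int_A f\,d\mu=\int_A f\,d\mu .$$
These two properties characterize $\E(f\mid\mathcal A)$, so $g=\E(f\mid\mathcal A)$ a.e., as desired. (In the increasing case $g$ is $\sigma\bigl(\bigcup_n\mathcal A_n\bigr)=\mathcal A$-measurable, the displayed identity holds for $A$ in the algebra $\bigcup_n\mathcal A_n$, and it extends to all $A\in\mathcal A$ by a monotone-class argument.)

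I expect the upcrossing inequality — the stopping-time estimate, slightly fussy to set up against the time-reversed filtration — to be the only non-routine ingredient; everything else (the tower property, conditional Jensen, uniform integrability of conditional expectations, Fatou, Vitali, and the limit identification) is standard. An alternative, which avoids upcrossings in favor of Doob's maximal inequality $\mu\{\sup_n|g_n|>\lambda\}\le\lambda^{-1}\|f\|_{1}$, is available in the increasing case: bounded $\bigl(\bigcup_m\mathcal A_m\bigr)$-measurable functions are dense in $L^{1}(X,\mathcal A,\mu)$ and on such a function $g_n$ is eventually constant, so an $\varepsilon/3$-argument with the maximal inequality gives almost-everywhere convergence directly; the decreasing case can then be recovered by the same maximal bound after first securing $L^{1}$ convergence from uniform integrability.
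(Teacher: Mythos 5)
Your proof is correct, but note that the paper itself offers no proof of this statement: it is quoted verbatim as \cite[Theorem 5.8]{EW} and used as a black box, so there is no internal argument to compare against. What you have written is the standard proof of Doob's (reverse) martingale convergence theorem --- upcrossing inequality for the time-reversed martingale to get almost-everywhere convergence, uniform integrability of $\{\E(f\mid\mathcal A_n)\}$ plus Vitali to get $L^1$ convergence, and the usual two-property characterization to identify the limit as $\E(f\mid\mathcal A)$ --- and each step is sound. You also correctly diagnosed the typo in the hypotheses: as printed, an increasing chain with $\bigcap_n\mathcal A_n=\mathcal A$ forces $\mathcal A=\mathcal A_1$ and the statement is vacuous; the version the paper actually invokes (in Lemma 3.3 and Lemma 4.1, where $\mathcal A=\bigcap_{g}\mathcal A_g$ with the $\mathcal A_g$ decreasing along $\Phi^{-1}$) is the decreasing one, which is exactly the case you treat in detail. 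The only cosmetic caveat is the measurability of the limit: $\lim_n g_n$ is defined off a null set, so to get a genuinely $\bigcap_k\mathcal A_k$-measurable representative one should take $g=\limsup_n g_n$ (or work with completed $\sigma$-algebras, which is harmless here since $(X,\B,\mu)$ is a Lebesgue space); this does not affect the conclusion, which is an almost-everywhere identity.
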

	
	Next, we give a simple lemma that will be used in the later proof.
	\begin{lemma}\label{lem}
		Let $\mathcal{A}$ be a sub-$\sigma$-algebra of $\B$. Let $G$ be an infinte, countable, discrete group. Let $(X,\B,\mu,G)$ be a measure-preserving system. Then for any $f\in L^{\infty}(X,\B,\mu)$ and $g\in G$, we have $$g\mathbb{E}(f|\mathcal{A})=\mathbb{E}(gf|g^{-1}\mathcal{A}).$$
	\end{lemma}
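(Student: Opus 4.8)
The plan is to verify directly that $g\,\mathbb{E}(f|\mathcal{A})$ satisfies the two defining properties of the conditional expectation $\mathbb{E}(gf\mid g^{-1}\mathcal{A})$ recalled above, and then invoke the almost-everywhere uniqueness of conditional expectation. Throughout I use the convention $gf=f\circ\Pi(g)$, and the fact that $\Pi(g)$ is an invertible, $\mu$-preserving, bi-measurable transformation.

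First I would check measurability and integrability. Since $\mathbb{E}(f|\mathcal{A})$ is $\mathcal{A}$-measurable, for every Borel set $B$ we have
$$\big(g\,\mathbb{E}(f|\mathcal{A})\big)^{-1}(B)=\Pi(g)^{-1}\big(\mathbb{E}(f|\mathcal{A})^{-1}(B)\big)\in g^{-1}\mathcal{A},$$
so $g\,\mathbb{E}(f|\mathcal{A})$ is $g^{-1}\mathcal{A}$-measurable; and it lies in $L^{\infty}(\mu)\subset L^{1}(\mu)$ because $\Pi(g)$ preserves $\mu$ and $f\in L^{\infty}$, so $\mathbb{E}(f|\mathcal{A})\in L^{\infty}$. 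Next I would check the averaging property. Every element of $g^{-1}\mathcal{A}$ has the form $g^{-1}C$ with $C\in\mathcal{A}$, and $\mathbbm{1}_C\circ\Pi(g)=\mathbbm{1}_{g^{-1}C}$. Using the change of variables $\int\phi\circ\Pi(g)\,d\mu=\int\phi\,d\mu$ (valid since $\Pi(g)$ preserves $\mu$) and the defining property of $\mathbb{E}(f|\mathcal{A})$ applied to $C\in\mathcal{A}$,
$$\int_{g^{-1}C} gf\,d\mu=\int(\mathbbm{1}_C f)\circ\Pi(g)\,d\mu=\int_{C} f\,d\mu=\int_{C}\mathbb{E}(f|\mathcal{A})\,d\mu=\int_{g^{-1}C} g\,\mathbb{E}(f|\mathcal{A})\,d\mu.$$
Thus $g\,\mathbb{E}(f|\mathcal{A})$ meets both requirements defining $\mathbb{E}(gf\mid g^{-1}\mathcal{A})$, and by the almost-everywhere uniqueness of the conditional expectation the asserted identity follows.

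There is no substantive obstacle here; the only care needed is the bookkeeping with the action convention and noting that invertibility and measure-preservation of $\Pi(g)$ are exactly what make the change of variables legitimate for both $f$ and $\mathbb{E}(f|\mathcal{A})$, and that $g^{-1}\mathcal{A}=\{g^{-1}C:C\in\mathcal{A}\}$ is genuinely a $\sigma$-algebra.
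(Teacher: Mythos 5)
Your proof is correct: verifying the two defining properties of conditional expectation for $g\,\mathbb{E}(f|\mathcal{A})$ and invoking a.e.\ uniqueness is exactly the standard argument, and your bookkeeping with the convention $gf=f\circ\Pi(g)$ versus $g^{-1}\mathcal{A}=\{\Pi(g)^{-1}C:C\in\mathcal{A}\}$ is consistent. The paper states this lemma without proof, so there is nothing to compare against beyond noting that your verification is the one the author evidently has in mind.
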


	\subsection{Entropy}
	
     Let $G$ be an infinite, countable, discrete amenable group. A sequence $\left\{F_{n}\right\}_{n=1}^{\infty}$ of non-empty finite subsets of $G$ is called a \textbf{F\o lner sequence} if for every $g \in G$,
	$
	\lim _{n \rightarrow+\infty} \frac{\left|g F_{n} \Delta F_{n}\right|}{\left|F_{n}\right|}=0,
	$
	where $|\cdot|$ denotes the cardinality of a set.
	
	A \textbf{measurable partition} of $(X,\B,\mu)$ is a disjoint collection of elements of $\B$ whose union is $X$.  Define $$\mathcal{P}^{\mu}_{X}=\{\alpha: \alpha\ is\ a\ finite\  measurable\  partition\ of\ (X,\B,\mu)\}.$$
	
	Given$\{\alpha_i:i\in \mathcal{I}\}\subset \mathcal{P}^{\mu}_{X}$ where $\mathcal{I}$ is an index set,  $\bigvee_{i\in \mathcal{I}}\alpha_i$ is a measurable partition with such a property that the sub-$\sigma$-algebra generated by $\bigvee_{i\in \mathcal{I}}\alpha_i$ is the minimal sub-$\sigma$-algebra including $\alpha_i,i\in \mathcal{I}$. When $\mathcal{I}$ is finite, we know  $\bigvee_{i\in \mathcal{I}}\alpha_i=\{\bigcap_{i\in \mathcal{I}}A_i:A_i\in \alpha_i\}$.
	
	Given $\alpha\in \mathcal{P}^{\mu}_{X}$, we define
	$
	H_{\mu}(\alpha)=\sum_{A \in \alpha}-\mu(A) \log \mu(A).
	$ Let $\mathcal{A}$ be a sub-$\sigma$-algebra of $\B$, we define $H_{\mu}(\alpha|\mathcal{A})=-\int\sum_{A \in \alpha}\mathbb{E}(1_{A}|\mathcal{A})\log\mathbb{E}(1_{A}|\mathcal{A})d\mu$.
	
	Let $(X,\B,\mu,G)$ be a measure-preserving system. The \textbf{measure-theoretic entropy of $\mu$ relative to $\alpha$} is defined by
	$$
	h_{\mu}(G, \alpha)=\lim _{n \rightarrow+\infty} \frac{1}{\left|F_{n}\right|} H_{\mu}\left(\bigvee_{g \in F_{n}} g^{-1} \alpha\right)
	$$
	where $\{F_{n}\}_{n=1}^\infty$ is a Fl\o lner sequence of the amenable group $G$.  By Theorem 6.1 of  \cite{LW}, the limit exists and is independent of choices of F\o lner sequences. The \textbf{measure-theoretic entropy of $\mu$} is defined by
	$$
	h_{\mu}(G)=h_{\mu}(G, X)=\sup _{\alpha \in \mathcal{P}_{X}^{\mu}} h_{\mu}(G, \alpha)
	.$$
	
	\subsection{Algebraic past}
	Given group $G$ with identity element $e_G$, an \textbf{algebraic past} of $G$ is a subset $\Phi$ with properties (1)-(3): (1) $\Phi \cap {\Phi}^{-1}$ is empty; (2) $\Phi \cup {\Phi}^{-1} \cup \{e_G\}=G$; (3) $\Phi \cdot \Phi \subset \Phi$. The group $G$ is \textbf{left-orderable} if there exists a linear ordering in $G$ which is invariant under left translation. The group $G$ is left-orderable if and only if there exists an algebraic past $\Phi$ in $G$. Indeed, one can obtain the desired linear-order based on $\Phi$ as follows: $g_1$ is less than $g_2$ (write $g_1 {<}_{\Phi} g_2$ or $g_1 <g_2$) if ${g_2}^{-1}g_1\in \Phi$. 
	\begin{ex}
		When $G=\Z$, let $\Phi$ be $\{n\in \Z:n<0\}$. Then we can get the natural linear order of $\Z$. 
	\end{ex}
	\begin{ex}\label{ex1}
		For each $d\in\N$ and for any $A_1,\cdots,A_d>0$, the $\Phi=\{(n_1,\cdots,n_d)\in\Z^d:there\ exists\ j\in\{0,1,\cdots,d-1\}such\ that\ \sum_{l=1}^{d-k}A_{l}n_l=0\ for\ k=0,\cdots,j-1\ and\ \sum_{l=1}^{d-j}A_{l}n_l<0\}$ is an algebraic past of $\Z^d$.
	\end{ex}
	
	\subsection{Pinsker $\sigma$-algebra}\label{K}
      Let $G$ be an infinite, countable, discrete amenable group. The \textbf{Pinsker $\sigma$-algebra} of measure-preserving system $(X,\B,\mu,G)$ is defined as $$P_{\mu}(G)=\{A\in \B:h_{\mu}(G,\{A,A^c\})=0\}.$$
	If $P_{\mu}(G)$ is trivival, we say that $(X,\B,\mu,G)$ is a \textbf{Kolmogorov system}(write $K$-system).
	
	When $G=\Z$, we can describe the structure of $P_{\mu}(G)$ based on the natural linear-order of $\Z$. Similarly, for genaral case, we can give the structure of $P_{\mu}(G)$ by algebraic past that plays same role as the natural linear-order of $\Z$.
	\begin{thm}\label{thm2}
		$($\cite[Theorem 3.1]{HXY}$)$Let $G$ be a countable discrete infinite amenable group with algebraic past $\Phi$, and let $(X,\B,\mu,G)$ be a measure-preserving system. Then for any $\alpha,\beta\in \mathcal{P}^{\mu}_{X}$,  $$h_{\mu}(G,\alpha\vee\beta)=h_{\mu}(G,\beta)+H_{\mu}(\alpha|\beta_{G}\vee\alpha_{\Phi}),$$ where $\beta_{G}=\bigvee_{g\in G}g\beta$ and $\alpha_{\Phi}=\bigvee_{g\in\Phi}g\alpha$.
	\end{thm}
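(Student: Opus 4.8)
The plan is to derive the identity by expanding $H_\mu\big(\bigvee_{g\in F_n}g^{-1}(\alpha\vee\beta)\big)$ with the chain rule for conditional entropy along an enumeration of $F_n$ adapted to $\Phi$, and then averaging over a F\o lner sequence $\{F_n\}_{n\ge1}$. The point of the algebraic past is that it provides a linear order on $G$ compatible with the group law, so that after translating each term of the expansion back to $e_G$ the conditioning $\sigma$-algebra becomes a sub-$\sigma$-algebra of $\beta_G\vee\alpha_\Phi$ that exhausts it in the F\o lner limit. Concretely I would set $g\prec g'$ iff $g'g^{-1}\in\Phi$; the defining properties of an algebraic past (in particular $e_G\notin\Phi$ because $\Phi\cap\Phi^{-1}=\emptyset$, trichotomy from $\Phi\cup\Phi^{-1}\cup\{e_G\}=G$, transitivity from $\Phi\cdot\Phi\subseteq\Phi$) make $\prec$ a right-invariant linear order on $G$. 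For each $n$ I list $F_n=\{g_1\prec g_2\prec\cdots\prec g_{|F_n|}\}$ (the $g_i$ depending on $n$, suppressed below) and record the bookkeeping identity: for every $i$ one has $\{g_ig_j^{-1}:j<i\}=g_iF_n^{-1}\cap\Phi$, where $F_n^{-1}=\{g^{-1}:g\in F_n\}$.

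Since $\bigvee_{g\in F_n}g^{-1}(\alpha\vee\beta)=\big(\bigvee_{g\in F_n}g^{-1}\beta\big)\vee\big(\bigvee_{g\in F_n}g^{-1}\alpha\big)$, the chain rule gives
\[
H_\mu\Big(\bigvee_{g\in F_n}g^{-1}(\alpha\vee\beta)\Big)=H_\mu\Big(\bigvee_{g\in F_n}g^{-1}\beta\Big)+\sum_{i=1}^{|F_n|}H_\mu\Big(g_i^{-1}\alpha\ \Big|\ \bigvee_{g\in F_n}g^{-1}\beta\vee\bigvee_{j<i}g_j^{-1}\alpha\Big).
\]
Dividing by $|F_n|$ and letting $n\to\infty$, the first term converges to $h_\mu(G,\beta)$ (this limit exists by \cite{LW}, quoted above), so the identity will follow once the normalized sum is shown to converge to $H_\mu(\alpha\mid\beta_G\vee\alpha_\Phi)$. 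Applying the measure-preserving transformation $g_i$, which preserves conditional entropy (cf. Lemma \ref{lem}), together with the bookkeeping identity rewrites the $i$-th summand as $H_\mu\big(\alpha\mid\bigvee_{h\in g_iF_n^{-1}}h\beta\vee\bigvee_{h\in g_iF_n^{-1}\cap\Phi}h\alpha\big)$. Since $g_iF_n^{-1}\subseteq G$ and $g_iF_n^{-1}\cap\Phi\subseteq\Phi$, the conditioning here is a sub-$\sigma$-algebra of $\beta_G\vee\alpha_\Phi$, so monotonicity of conditional entropy in the conditioning $\sigma$-algebra bounds every summand from below by $H_\mu(\alpha\mid\beta_G\vee\alpha_\Phi)$, giving $\liminf\ge H_\mu(\alpha\mid\beta_G\vee\alpha_\Phi)$ at once.

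For the matching upper bound, fix $\varepsilon>0$. Using an exhaustion of $G$ by finite sets together with martingale convergence for conditional expectations along an increasing sequence of $\sigma$-algebras (cf. Theorem \ref{thm4}, plus bounded convergence for $t\mapsto-t\log t$), choose a finite $K\subseteq G$ with $H_\mu\big(\alpha\mid\bigvee_{h\in K}h\beta\vee\bigvee_{h\in K\cap\Phi}h\alpha\big)<H_\mu(\alpha\mid\beta_G\vee\alpha_\Phi)+\varepsilon$. Whenever $g_iF_n^{-1}\supseteq K$, the $\sigma$-algebra conditioned on in the $i$-th summand contains $\bigvee_{h\in K}h\beta\vee\bigvee_{h\in K\cap\Phi}h\alpha$, so that summand is $<H_\mu(\alpha\mid\beta_G\vee\alpha_\Phi)+\varepsilon$; and $g_iF_n^{-1}\supseteq K$ is equivalent to $g_i\in\bigcap_{k\in K}kF_n$, a subset of $F_n$ with $|\bigcap_{k\in K}kF_n|/|F_n|\to1$ by the F\o lner property. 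The at most $o(|F_n|)$ remaining indices contribute at most $\log|\alpha|$ apiece, hence $o(|F_n|)$ in total, so $\limsup\le H_\mu(\alpha\mid\beta_G\vee\alpha_\Phi)+\varepsilon$; letting $\varepsilon\to0$ finishes the proof. The step I expect to be the main obstacle is this final averaging argument: verifying that $\prec$ genuinely realizes $\{g_ig_j^{-1}:j<i\}=g_iF_n^{-1}\cap\Phi$, that a single finite $K$ can approximate $\beta_G$ and $\alpha_\Phi$ simultaneously, and that the F\o lner estimate $|\bigcap_{k\in K}kF_n|/|F_n|\to1$ is invoked on the correct side of the translations (the only place where amenability is genuinely used); the remaining manipulations with the chain rule and with conditional expectations are routine.
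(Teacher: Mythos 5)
The paper does not prove this statement --- it is imported verbatim from \cite[Theorem 3.1]{HXY} --- so there is no internal proof to compare against; judged on its own, your argument is correct and is the standard one for such formulas. The key verifications all check out: the right-invariant order $\prec$ induced by $\Phi$ (trichotomy from $\Phi\cup\Phi^{-1}\cup\{e_G\}=G$ and $\Phi\cap\Phi^{-1}=\emptyset$, transitivity from $\Phi\cdot\Phi\subset\Phi$) does give $\{g_ig_j^{-1}:j<i\}=g_iF_n^{-1}\cap\Phi$ for a $\prec$-increasing enumeration of $F_n$; translating the $i$-th chain-rule term by $g_i$ lands the conditioning inside $\beta_G\vee\alpha_\Phi$, which gives the lower bound, while the increasing martingale theorem plus the Følner estimate $|F_n\cap\bigcap_{k\in K}kF_n|/|F_n|\to 1$ (with the $o(|F_n|)$ exceptional indices each contributing at most $\log|\alpha|$) gives the matching upper bound.
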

	
	Based on the Theorem \ref{thm2}, we can get the following results.
	 \begin{prop}\label{prop1}
	 	Let $G$ be a countable discrete infinite amenable group with algebraic past $\Phi$  and identity element $e_{G}$, and let $(X,\B,\mu,G)$ be a measure-preserving system. Then $$P_{\mu}(G)\supset \bigvee_{\beta\in \mathcal{P}^{\mu}_{X}}\bigwedge_{g\in \Phi\cup\{e_G\}}g\beta_{\Phi}.$$
	 \end{prop}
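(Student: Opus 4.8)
The plan is to reduce to a single two‑set partition and then feed the hypothesis into Theorem~\ref{thm2} twice. Since $P_{\mu}(G)$ is a $\sigma$-algebra, it suffices to fix $\beta\in\mathcal{P}^{\mu}_{X}$ and a set $A$ lying in $\bigcap_{g\in\Phi\cup\{e_{G}\}}\sigma(g\beta_{\Phi})$, and to prove that $\alpha:=\{A,A^{c}\}$ satisfies $h_{\mu}(G,\alpha)=0$; then $A\in P_{\mu}(G)$ by the definition of the Pinsker $\sigma$-algebra, and the outer join $\bigvee_{\beta}$ produces nothing outside $P_{\mu}(G)$.

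First I would record an elementary monotonicity fact: for every $k\in\Phi$ one has $k\Phi\subseteq\Phi$ by $\Phi\cdot\Phi\subseteq\Phi$, hence $\sigma(k\beta_{\Phi})=\bigvee_{h\in k\Phi}\sigma(h\beta)\subseteq\sigma(\beta_{\Phi})$, and applying $k^{-1}$ also $\sigma(\beta_{\Phi})\subseteq\sigma(k^{-1}\beta_{\Phi})$. The crux is then the claim that the \emph{entire} orbit of $\alpha$ is subordinate to $\beta_{\Phi}$, i.e.\ $\sigma(\alpha_{G})\subseteq\sigma(\beta_{\Phi})$. To prove it, write $G=\Phi\sqcup\Phi^{-1}\sqcup\{e_{G}\}$; for $h\in G$ we want $\sigma(h\alpha)\subseteq\sigma(\beta_{\Phi})$, equivalently $\sigma(\alpha)\subseteq\sigma(h^{-1}\beta_{\Phi})$. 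If $h^{-1}\in\Phi\cup\{e_{G}\}$ this is precisely the hypothesis on $A$; if instead $h\in\Phi$ (so that $h^{-1}\notin\Phi\cup\{e_{G}\}$), combine the hypothesis in the form $\sigma(\alpha)\subseteq\sigma(\beta_{\Phi})$ with the monotonicity inclusion $\sigma(\beta_{\Phi})\subseteq\sigma(h^{-1}\beta_{\Phi})$ just obtained. Hence $\sigma(\alpha_{G})=\bigvee_{h\in G}\sigma(h\alpha)\subseteq\sigma(\beta_{\Phi})$, and in particular $\sigma(\alpha)\subseteq\sigma(\beta_{G})$ and $\sigma(\alpha_{\Phi})\subseteq\sigma(\beta_{G})$.

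Next I would invoke Theorem~\ref{thm2}. From the inclusions above, $\beta_{G}\vee\alpha_{\Phi}=\beta_{G}$, $\alpha_{G}\vee\beta_{\Phi}=\beta_{\Phi}$, and $H_{\mu}(\alpha\,|\,\beta_{G})=0$ since $\alpha$ is $\beta_{G}$-measurable. Applying Theorem~\ref{thm2} to the pair $(\alpha,\beta)$ gives $h_{\mu}(G,\alpha\vee\beta)=h_{\mu}(G,\beta)+H_{\mu}(\alpha\,|\,\beta_{G})=h_{\mu}(G,\beta)$; applying it to $(\beta,\alpha)$ gives $h_{\mu}(G,\alpha\vee\beta)=h_{\mu}(G,\alpha)+H_{\mu}(\beta\,|\,\beta_{\Phi})$; and applying it to $(\beta,\mathbf{0})$, with $\mathbf{0}=\{X\}$ the trivial partition (so $\mathbf{0}_{G}=\mathbf{0}$ and $h_{\mu}(G,\mathbf{0})=0$), gives $h_{\mu}(G,\beta)=H_{\mu}(\beta\,|\,\beta_{\Phi})$. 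Comparing the three identities forces $h_{\mu}(G,\alpha)=0$, as required.

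The only step that needs an idea is the orbit claim $\sigma(\alpha_{G})\subseteq\sigma(\beta_{\Phi})$: one has to extract subordination of $\alpha$ along \emph{all} of $G$ out of a hypothesis that, a priori, only concerns translates by $\Phi\cup\{e_{G}\}$. This is exactly where the ``$e_{G}$'' in the index set is essential (it provides $\sigma(\alpha)\subseteq\sigma(\beta_{\Phi})$, which together with the monotonicity lemma handles the directions $h\in\Phi$), while the directions $h\in\Phi^{-1}$ are covered directly by the $\Phi$-translates in the hypothesis. Everything afterwards is formal manipulation of Theorem~\ref{thm2}; I would also note, harmlessly, that $\bigwedge_{g\in\Phi\cup\{e_{G}\}}$ over an infinite index set is read at the level of $\sigma$-algebras, which causes no difficulty here since it is only ever tested against a two‑set partition.
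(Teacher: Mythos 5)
Your proof is correct and follows essentially the same route as the paper's: the key step in both is the orbit containment (your $\sigma(\alpha_{G})\subseteq\sigma(\beta_{\Phi})$, which is the paper's $\beta_{G}\subset\alpha_{\Phi}$ with the roles of $\alpha$ and $\beta$ reversed), followed by bookkeeping with Theorem \ref{thm2}. The only cosmetic difference is that you invoke Theorem \ref{thm2} a third time with the two partitions in the opposite order, where the paper instead uses monotonicity of conditional entropy in the conditioning $\sigma$-algebra to get the same cancellation.
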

	 \begin{proof}
	 	Select $\alpha\in \mathcal{P}^{\mu}_{X}$ and fix it. Choose $\beta\in \mathcal{P}^{\mu}_{X}$ such that $\beta \subset \bigwedge_{g\in \Phi\cup\{e_G\}}g\alpha_{\Phi}$. Then $\beta_{G}\subset \alpha_{\Phi}$. And $$H_{\mu}(\alpha\vee\beta|\alpha_{\Phi}\vee\beta_{\Phi})\le H_{\mu}(\alpha\vee\beta|\alpha_{\Phi})=H_{\mu}(\alpha|\alpha_{\Phi}).$$
	 	By Theorem \ref{thm2}, we know $h_{\mu}(G,\alpha)=H_{\mu}(\alpha|\alpha_{\Phi})$. Then  $$H_{\mu}(\alpha\vee\beta|\alpha_{\Phi}\vee\beta_{\Phi})=h_{\mu}(G,\beta)+H_{\mu}(\alpha|\beta_{G}\vee\alpha_{\Phi})=H_{\mu}(\alpha|\alpha_{\Phi})+H_{\mu}(\beta|\beta_{\Phi}).$$ So $\beta\subset P_{\mu}(G)$.
	 \end{proof}
	
	\subsection{Maximal ergodic theorem}
		\begin{thm}\label{thm5}
			$($\cite[Theorem 1.2.(iii)]{IAMMS}$)$Let $d_1\in \N$ be given. Let $T_1,\cdots,T_{d_1}:X\rightarrow X$ be a family of invertible measure-preserving transformations of $(X,\B,\mu)$ that generates a nilpoent group of step two. Assume that $p_1,\cdots,p_{d_1}\in \Z[n]$ and let $d_2=\max\{\deg p_j(n):1\le j\le d_1\}$. Then for any $f\in L^{p}(X,\B,\mu),p>1$, there exists a constant $c(d_1,d_2,p)$ such that $$\big|\big|\sup_{N\ge 1}|\frac{1}{N}\sum\limits_{n=0}^{N-1}f(T_{1}^{p_1(n)}\cdots T_{d_1}^{p_{d_1}(n)}x)|\big|\big|_{p}\le c(d_1,d_2,p)||f||_{p}.$$
		\end{thm}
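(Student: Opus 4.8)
\emph{Proof strategy.} The plan is to prove the inequality by the Hardy--Littlewood circle method combined with Calderón's transference principle, in the spirit of Bourgain's polynomial maximal theorem and its later multidimensional and nilpotent refinements.

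\textbf{Step 1 (Transference to the group).} Since $T_1,\dots,T_{d_1}$ generate a nilpotent group $\mathbb{G}$ of step two, one works with a countable left-invariant model in which $A_Nf(x)=\frac1N\sum_{n=0}^{N-1}f(T_1^{p_1(n)}\cdots T_{d_1}^{p_{d_1}(n)}x)$ becomes convolution with the probability measure $\sigma_N=\frac1N\sum_{n=0}^{N-1}\delta_{Q(n)}$, where $Q(n)\in\mathbb{G}$ has ``abelian'' coordinates $p_1(n),\dots,p_{d_1}(n)$ and central coordinates obtained from the pairwise commutator contributions, themselves polynomials in $n$ of degree at most $2d_2$. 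By Calderón transference it suffices to bound $\sup_{N\ge1}|f*\sigma_N|$ on $\ell^p(\mathbb{G})$ with a constant depending only on $d_1,d_2,p$. Passing to the Fourier transform on $\mathbb{G}$ (its unitary dual), the symbol of $\sigma_N$ is a vector-valued exponential (Weyl) sum $m_N(\xi)=\frac1N\sum_{n=0}^{N-1}e^{2\pi i\,\xi\cdot\mathbf{P}(n)}$ for a polynomial map $\mathbf{P}$.

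\textbf{Step 2 (Circle method and multiplier theory).} Decompose $N$ dyadically, $N\in[2^k,2^{k+1})$. On the minor arcs, Weyl's inequality gives $|m_N(\xi)|\le C\,N^{-\delta}$ for some $\delta=\delta(d_2)>0$, which yields an $\ell^2$ bound for the minor-arc maximal piece that is geometrically summable in $k$; interpolating with the trivial $\ell^\infty$ bound gives the $\ell^p$, $p>1$, estimate for this contribution. On the major arcs around rationals $a/q$ with $q$ up to roughly $e^{k^\epsilon}$, one approximates $m_N$ by the product of a normalized Gauss-type sum over residues mod $q$ and the corresponding Archimedean integral; the resulting model maximal operator is dominated by a Hardy--Littlewood-type maximal function on $\mathbb{G}$ composed with the Ionescu--Wainger projections onto unions of such major arcs. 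The Ionescu--Wainger multiplier theorem, in its form adapted to the nilpotent and multidimensional setting, bounds these projections on $\ell^p$ for every $p>1$ with only a logarithmic loss, while the circle-method error terms are controlled by square functions summable over $k$. Summing the dyadic blocks closes the estimate and yields the constant $c(d_1,d_2,p)$.

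\textbf{Main obstacle.} The genuinely difficult range is $p$ near $1$: there the crude $\ell^2$ and $\ell^\infty$ interpolation used near $p=2$ is unavailable, and one must invoke the full Ionescu--Wainger sampling and multiplier machinery together with a careful bookkeeping of the minor-arc exponents so that the $k$-sums of the error square functions converge for every $p>1$. A secondary difficulty is the non-commutativity: because $\mathbb{G}$ is step-two nilpotent, the central coordinates of $Q(n)$ are quadratic in $n$ and couple the generators, so the Weyl and Gauss-sum estimates must be carried out for the entire polynomial system $\mathbf{P}$ rather than coordinate by coordinate.
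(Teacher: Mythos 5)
The paper does not prove this statement at all: Theorem \ref{thm5} is imported verbatim as Theorem 1.2(iii) of \cite{IAMMS} (Ionescu, Magyar, Mirek and Szarek) and is used as a black box, so there is no internal proof to compare yours against. Judged on its own, your proposal correctly identifies the broad architecture of the known proof from that reference --- Calder\'on transference to a discrete nilpotent model, a dyadic major/minor arc decomposition, Weyl-type gains on minor arcs, and Ionescu--Wainger multiplier theory on major arcs --- but it is a strategy outline, not a proof. Every step that makes this theorem hard is asserted rather than established.

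Two of the gaps are substantive rather than merely expository. First, in Step 1 you propose to ``pass to the Fourier transform on $\mathbb{G}$ (its unitary dual)''; for a discrete step-two nilpotent group the unitary dual is not a usable object (such groups are not type I), and the actual argument in \cite{IAMMS} instead realizes the averages as non-convolution operators on a lattice $\Z^{D}$ (generator coordinates plus central/commutator coordinates) and runs classical Fourier analysis there, at the price of losing the convolution structure --- which is precisely what forces the new ``nilpotent circle method'' of that paper. Second, your Step 2 invokes ``Weyl's inequality'' and ``the Ionescu--Wainger multiplier theorem, in its form adapted to the nilpotent and multidimensional setting'' as if these were available off the shelf; the minor-arc estimate for the coupled polynomial system $\mathbf{P}$ (where the central coordinates are genuinely bilinear in the inputs) and the adaptation of the Ionescu--Wainger theory to this non-commutative setting are the central technical contributions of \cite{IAMMS}, each occupying a substantial part of that paper. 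As written, your argument would not close without supplying those two ingredients; citing \cite{IAMMS}, as the paper does, is the honest alternative.
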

		Based on the ideas of  proof of Corollary 2.2 of \cite{DL} and Theorem \ref{thm5}, we can get the following result.
		\begin{thm}\label{thm6}
			Let $d,m\in \N$ be given.  Let $T_1,\cdots,T_d:X\rightarrow X$ be a family of invertible measure-preserving transformations of $(X,\B,\mu)$ that generates a nilpoent group of step two. Assume that $p_{1,j},\cdots,p_{d,j}\in \Z[n],1\le j\le m$ and  let $q_j,1\le j\le m$ be positive real number with $\sum\limits_{j=1}^{m}\frac{1}{q_j}<1$. Then $$\{(f_1,\cdots,f_m): \lim\limits_{N\rightarrow\infty}\frac{1}{N}\sum_{n=0}^{N-1}\prod_{j=1}^{m}f_j(T_{1}^{p_{1,j}(n)}\cdots T_{d}^{p_{d,j}(n)}x)\ exists\ almost\ everywhere\}$$ is closed in $L^{q_1}(X,\B,\mu)\times\cdots\times L^{q_m}(X,\B,\mu)$.
		\end{thm}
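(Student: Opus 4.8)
The plan is to combine the Banach principle (convergence set is closed when controlled by a maximal inequality) with a multilinear trick to pass from a single function to a tuple. First I would recall the classical scheme from Corollary 2.2 of \cite{DL}: to show that the set
$$S=\{(f_1,\dots,f_m):\lim_{N\to\infty}\tfrac1N\sum_{n=0}^{N-1}\prod_{j=1}^{m}f_j(T_1^{p_{1,j}(n)}\cdots T_d^{p_{d,j}(n)}x)\text{ exists a.e.}\}$$
is closed in $L^{q_1}\times\cdots\times L^{q_m}$, I would take a sequence $(f_1^{(k)},\dots,f_m^{(k)})\in S$ converging in the product norm to $(f_1,\dots,f_m)$, and estimate the difference of the averaging operators applied to the two tuples. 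The standard telescoping identity
$$\prod_{j=1}^{m}a_j-\prod_{j=1}^{m}b_j=\sum_{i=1}^{m}\Big(\prod_{j<i}a_j\Big)(a_i-b_i)\Big(\prod_{j>i}b_j\Big)$$
reduces the problem to controlling, for each fixed $i$, the maximal function
$$\sup_{N\ge1}\Big|\tfrac1N\sum_{n=0}^{N-1}\Big(\prod_{j<i}f_j^{(k)}\circ S_{j,n}\Big)\big((f_i^{(k)}-f_i)\circ S_{i,n}\big)\Big(\prod_{j>i}f_j\circ S_{j,n}\Big)\Big|,$$
where $S_{j,n}=T_1^{p_{1,j}(n)}\cdots T_d^{p_{d,j}(n)}$.

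Next I would apply Hölder's inequality inside the average to peel off the bounded factors and isolate the $i$-th term. Pointwise in $x$, Hölder with exponents chosen so that $\sum_{j\ne i}\tfrac{1}{q_j}+\tfrac{1}{r_i}=1$ (so $r_i<q_i$, which is possible precisely because $\sum_j \tfrac1{q_j}<1$ gives room) bounds the above by a product of the individual maximal/ergodic averages of $|f_j^{(k)}|^{q_j}$ (for $j\ne i$) and of $|f_i^{(k)}-f_i|^{r_i}$, each raised to an appropriate power. Then I would take $L^1$-norms (or the relevant $L^{s}$-norm) and apply Hölder again at the level of integration, so that the whole expression is dominated by
$$\prod_{j\ne i}\big\|\sup_{N}A_N^{(j)}|f_j^{(k)}|^{q_j}\big\|_{\infty\text{ or }1}^{1/q_j}\cdot\big\|\sup_{N}A_N^{(i)}|f_i^{(k)}-f_i|^{r_i}\big\|^{1/r_i},$$
where $A_N^{(j)}$ denotes the $N$-th polynomial average along $S_{j,\cdot}$. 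Here Theorem \ref{thm5} enters: since the single transformations generate a nilpotent group of step two (a fortiori each $S_{j,\cdot}$ is a polynomial orbit in such a group), the maximal operator $\sup_N A_N^{(j)}$ is bounded on $L^{p}$ for every $p>1$, so each factor is finite and, crucially, the factor involving $f_i^{(k)}-f_i$ tends to $0$ as $k\to\infty$ because $f_i^{(k)}\to f_i$ in $L^{q_i}$ and $r_i<q_i$ (using that we are on a probability space, so $L^{q_i}\hookrightarrow L^{r_i}$).

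Summing the $m$ terms, this shows the averaging operators applied to $(f_1^{(k)},\dots,f_m^{(k)})$ converge, uniformly in $N$ in the appropriate maximal norm, to those applied to $(f_1,\dots,f_m)$; combined with a.e.\ existence of the limit for each $(f_1^{(k)},\dots,f_m^{(k)})$, a routine $\varepsilon/3$ argument (bounding $\limsup_{N,M}$ of the difference of averages for $(f_1,\dots,f_m)$ by twice the maximal error plus the oscillation for the approximant, which is $0$) yields that the limit exists a.e.\ for $(f_1,\dots,f_m)$, i.e.\ $(f_1,\dots,f_m)\in S$. The main obstacle is the bookkeeping in the double application of Hölder — choosing the intermediate exponents $r_i$ and the integration exponents consistently so that every factor is finite and the error factor genuinely vanishes; this is exactly where the strict inequality $\sum_j\tfrac1{q_j}<1$ is used, and where one must be careful that applying $\sup_N$ before or after Hölder does not break the $L^p$-boundedness coming from Theorem \ref{thm5}. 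Everything else is a direct transcription of the argument in \cite{DL} from $\Z$ to the step-two nilpotent setting.
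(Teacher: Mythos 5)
Your overall strategy---telescoping the difference of products, a double application of H\"older to reduce to the linear maximal inequality of Theorem \ref{thm5}, and the Banach-principle/oscillation argument at the end---is exactly what the paper intends (it gives no proof of Theorem \ref{thm6} beyond pointing to Corollary 2.2 of \cite{DL} and Theorem \ref{thm5}), and your $\varepsilon/3$ endgame is fine. The problem is your exponent allocation. By putting the full exponent $q_j$ on each factor $j\neq i$ in the inner (average over $n$) H\"older step, you are left after the outer H\"older step with the quantities $\big\|\sup_N A_N^{(j)}|f_j^{(k)}|^{q_j}\big\|_{1}^{1/q_j}$ (or the $L^\infty$ version). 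Neither is controlled: $|f_j^{(k)}|^{q_j}$ lies only in $L^1$, Theorem \ref{thm5} bounds the maximal operator only on $L^p$ with $p>1$, and the $L^\infty$ norm is infinite for unbounded $f_j^{(k)}\in L^{q_j}$. So several factors of your dominating product can be $+\infty$ and the bound says nothing. You have concentrated all of the slack coming from $\sum_j 1/q_j<1$ on the single exponent $r_i$ of the error factor, whereas it has to be distributed over all $m$ factors.

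The fix is routine but necessary: choose $\tilde q_j\in(1,q_j)$ with $\sum_j 1/\tilde q_j\le 1$, for instance $\tilde q_j=q_j\sum_l q_l^{-1}$ when $m\ge 2$ (the case $m=1$ is just the Banach principle for $q_1>1$). Apply the inner H\"older inequality with the exponents $\tilde q_j$ for \emph{every} $j$, including $j=i$, and the outer H\"older inequality with the exponents $q_j$ (allowed on a probability space since $\sum_j 1/q_j<1$). Each factor then takes the form $\big\|\sup_N A_N^{(j)} h_j^{\tilde q_j}\big\|_{q_j/\tilde q_j}^{1/\tilde q_j}$ with $q_j/\tilde q_j>1$ and $h_j^{\tilde q_j}\in L^{q_j/\tilde q_j}$, so Theorem \ref{thm5} applies to every factor and gives the bound $c^{1/\tilde q_j}\|h_j\|_{q_j}$; the factor with $h_i=|f_i^{(k)}-f_i|$ tends to $0$ while the others remain bounded. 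With this correction the rest of your argument goes through and coincides with the intended proof.
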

	The following theorem focus on the polynomiall ergodic averages that go along primes.
	\begin{thm}\label{thm1}
	$($\cite[Theorem 1.2]{N}$)$Let $(X,\B,\mu,T)$ be a measure-preserving system. Given $q(n)\in\Z[n]$ and $p>1$, then for any $f\in L^{p}(X,\B,\mu)$, there exixts a constant $c(p,q,T)$ such that $$\big|\big|\sup_{N\ge 1}|\frac{1}{N}\sum\limits_{n=0}^{N-1}T^{q(a_n)}f|\big|\big|_{p}\le c(p,q,T)||f||_{p}$$ where $\mathbb{P}=\{a_0<a_1<\cdots<a_n<\cdots\}$ consists of all prime numbers.
	\end{thm}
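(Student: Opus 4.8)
The plan is to follow J. Bourgain's circle method approach to pointwise ergodic theorems, in the form adapted to the sequence of primes by M. Wierdl and M. Nair. First, by the Calder\'on transference principle it suffices to prove the analogous maximal inequality on $\ell^{p}(\Z)$ (with counting measure) for the discrete averaging operators
\[
\mathcal{A}_{N}g(x)=\frac{1}{N}\sum_{n=0}^{N-1}g\big(x-q(a_{n})\big),\qquad x\in\Z;
\]
the dynamics then play no further role. Second, using the prime number theorem and partial summation I would pass from the unweighted average over the first $N$ primes to the logarithmically weighted average over primes $p\le N$: the difference between $\frac1N\sum_{n=0}^{N-1}g(x-q(a_{n}))$ and $\frac{1}{N}\sum_{p\le N}(\log p)\,g(x-q(p))$ is controllable in $\ell^{p}$, so it is enough to bound the maximal operator of the latter family, whose Fourier multiplier on $\R/\Z$ is $m_{N}(\theta)=\frac{1}{N}\sum_{p\le N}(\log p)\,e(q(p)\theta)$, where $e(t)=e^{2\pi it}$.

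Next I would run the Hardy--Littlewood circle method on $m_{N}$. Fixing a large parameter, decompose $\R/\Z$ into major arcs around rationals $a/b$ with small denominator and the complementary minor arcs. On the minor arcs, Vinogradov's bounds for exponential sums over primes give a power saving $|m_{N}(\theta)|\ll N^{-\delta}$; combined with an $\ell^{2}$ Plancherel estimate for the corresponding Fourier projections and a dyadic (Littlewood--Paley) decomposition in the scale $N\asymp 2^{k}$, the minor-arc pieces sum, yielding an $\ell^{2}$ maximal bound which interpolates with the trivial $\ell^{\infty}$ bound. On the major arcs I would use the standard approximation $m_{N}(\theta)\approx S(a/b)\,v_{N}(\theta-a/b)$, with $S(a/b)$ a Ramanujan-type arithmetic factor and $v_{N}$ a smooth Archimedean factor; after a $W$-trick restricting the primes to a fixed reduced residue class modulo $\prod_{p\le w}p$ (in the spirit of Ramar\'e), the arithmetic factors become summable over $b$, and the ``continuous'' operator attached to the $v_{N}$ is handled exactly as in Bourgain's treatment of $\frac1N\sum_{n\le N}g(x-n^{k})$, via a dyadic square-function / oscillation estimate in $\ell^{2}$, again interpolated with $\ell^{\infty}$.

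Assembling the minor-arc and major-arc bounds gives the $\ell^{p}$ maximal inequality for all $p>1$ --- for $p\ge2$ directly by interpolation with $\ell^{\infty}$, and for $1<p<2$ by running the multiplier estimates directly in $\ell^{p}$ as in Bourgain's scheme --- and undoing the transference yields the stated bound for the prime averages $\tfrac1N\sum_{n=0}^{N-1}T^{q(a_{n})}f$. The hard part will be the interface between the number theory and the maximal function: one needs Vinogradov's minor-arc estimates for prime exponential sums to be uniform enough that, after the decomposition in $N$, the dyadic pieces can be summed with a quantitative gain, and one must control the singular-series / arithmetic factors on the major arcs --- this is exactly what the $W$-trick buys --- so that the superposition over denominators $b$ converges. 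Once these two number-theoretic inputs are secured, the remainder is a routine adaptation of Bourgain's argument.
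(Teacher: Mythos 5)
The paper does not prove this statement: Theorem \ref{thm1} is quoted verbatim from the reference \cite{N} (Nair's extension of Bourgain's and Wierdl's pointwise ergodic theorems to polynomials evaluated at primes), so there is no in-paper proof to compare against. Your outline --- Calder\'on transference to $\ell^p(\Z)$, passage to logarithmically weighted prime sums via the prime number theorem, and a major/minor arc analysis with Vinogradov's estimates on the minor arcs and Bourgain-type square-function and oscillation bounds on the major arcs --- is a faithful reconstruction of the strategy actually used in that reference, so as a roadmap it is the right one. Be aware, though, that what you have written is a plan rather than a proof: the two places you flag as ``the hard part'' (uniform summability of the dyadic minor-arc pieces after the $N\asymp 2^k$ decomposition, and the $\ell^p$ treatment for $1<p<2$ of the superposition of major-arc multipliers over denominators) are precisely the substantive content of \cite{N}, and the $W$-trick you invoke is not needed in the original argument, where the arithmetic factors already decay fast enough in the denominator $b$ to be summed directly. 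If this theorem is being used as a black box, the citation suffices; if you intend your sketch as a replacement proof, those two steps must be carried out in full.
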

       Based on the ideas of  proof of Corollary 2.2 of \cite{DL} and Theorem \ref{thm1}, we can get the following result.
		\begin{thm}\label{thm3}
			Let $G$ be an infinite, countable, discrete group. Let $(X,\B,\mu,G)$ be a measure-preserving system. Given $d\in \N$, let $q_j$ be positive real number with $\sum\limits_{j=1}^{d}\frac{1}{q_j}<1$, $p_j(n)\in\Z[n]$, and $g_j\in G,1\le j\le d$. Then $$\{(f_1,\cdots,f_d): \lim\limits_{N\rightarrow\infty}\frac{1}{N}\sum_{n=0}^{N-1}\prod_{j=1}^{d}g_{j}^{p_j(a_n)}f_j\ exists\ almost\ everywhere\}$$ is closed in $L^{q_1}(X,\B,\mu)\times\cdots\times L^{q_d}(X,\B,\mu)$ where $\mathbb{P}=\{a_0<a_1<\cdots<a_n<\cdots\}$ consists of all prime numbers.
		\end{thm}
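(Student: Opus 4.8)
The plan is to deduce the statement from Theorem \ref{thm1} in two steps, along the lines of the proof of Corollary 2.2 of \cite{DL}: first upgrade the one-variable maximal inequality to a multilinear maximal inequality for the averages
$$A_N(\vec h)=A_N(h_1,\dots,h_d):=\frac{1}{N}\sum_{n=0}^{N-1}\prod_{j=1}^{d}g_j^{p_j(a_n)}h_j ,$$
where $\vec h=(h_1,\dots,h_d)$, and then run the standard approximation argument. Throughout set $\theta:=\sum_{j=1}^{d}1/q_j$; by hypothesis $0<\theta<1$, and we put $r_j:=\theta q_j$, so that $\sum_{j=1}^{d}1/r_j=1$, $r_j<q_j$ and $q_j/r_j=1/\theta>1$.

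For the maximal inequality, fix $x\in X$ and apply the generalized H\"older inequality on the uniform probability space on $\{0,1,\dots,N-1\}$ to the functions $n\mapsto|(g_j^{p_j(a_n)}h_j)(x)|$ with the exponents $r_j$; since $|g_j^{p_j(a_n)}h_j|^{r_j}=g_j^{p_j(a_n)}(|h_j|^{r_j})$, this yields the pointwise bound
$$\frac{1}{N}\sum_{n=0}^{N-1}\prod_{j=1}^{d}|(g_j^{p_j(a_n)}h_j)(x)|\ \le\ \prod_{j=1}^{d}\Big(\frac{1}{N}\sum_{n=0}^{N-1}g_j^{p_j(a_n)}(|h_j|^{r_j})(x)\Big)^{1/r_j}.$$
Taking $\sup_{N\ge1}$ (using that $t\mapsto t^{1/r_j}$ is increasing and that the supremum of a product is at most the product of the suprema) and then $L^{1/\theta}$-norms, together with one further application of generalized H\"older over the index $j$ (valid since $\sum_j1/q_j=\theta$), gives
$$\Big\|\sup_{N\ge1}|A_N(\vec h)|\Big\|_{1/\theta}\ \le\ \prod_{j=1}^{d}\Big\|\Big(\sup_{N\ge1}\frac{1}{N}\sum_{n=0}^{N-1}g_j^{p_j(a_n)}(|h_j|^{r_j})\Big)^{1/r_j}\Big\|_{q_j}.$$
Since $h_j\in L^{q_j}(X,\B,\mu)$ and $q_j/r_j=1/\theta$, we have $|h_j|^{r_j}\in L^{1/\theta}(X,\B,\mu)$ with $\||h_j|^{r_j}\|_{1/\theta}=\|h_j\|_{q_j}^{r_j}$; so Theorem \ref{thm1}, applied to the measure-preserving system $(X,\B,\mu,g_j)$ with polynomial $p_j$ and exponent $1/\theta>1$, bounds the $j$-th factor on the right by $c(1/\theta,p_j,g_j)^{1/r_j}\|h_j\|_{q_j}$. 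Hence there is a constant $C$, depending only on $d$, the $q_j$, the $p_j$ and the $g_j$, with
$$\Big\|\sup_{N\ge1}|A_N(h_1,\dots,h_d)|\Big\|_{1/\theta}\ \le\ C\prod_{j=1}^{d}\|h_j\|_{q_j}.$$

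To conclude, let $\vec f^{(k)}=(f_1^{(k)},\dots,f_d^{(k)})$ lie in the set for every $k$ and converge to $\vec f=(f_1,\dots,f_d)$ in $L^{q_1}(X,\B,\mu)\times\cdots\times L^{q_d}(X,\B,\mu)$. By multilinearity of $A_N$,
$$A_N(\vec f)-A_N(\vec f^{(k)})=\sum_{i=1}^{d}A_N\big(f_1^{(k)},\dots,f_{i-1}^{(k)},\,f_i-f_i^{(k)},\,f_{i+1},\dots,f_d\big),$$
and applying the maximal inequality just established to each summand (the norms $\|f_j^{(k)}\|_{q_j}$ stay bounded while $\|f_i-f_i^{(k)}\|_{q_i}\to0$) shows that $\big\|\sup_{N\ge1}|A_N(\vec f)-A_N(\vec f^{(k)})|\big\|_{1/\theta}\to0$. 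Choose a subsequence $(k_l)$ along which $\sup_{N\ge1}|A_N(\vec f)-A_N(\vec f^{(k_l)})|\to0$ $\mu$-almost everywhere; then, $\mu$-almost everywhere,
$$\limsup_{N,M\to\infty}|A_N(\vec f)-A_M(\vec f)|\ \le\ 2\sup_{N\ge1}|A_N(\vec f)-A_N(\vec f^{(k_l)})|+\limsup_{N,M\to\infty}|A_N(\vec f^{(k_l)})-A_M(\vec f^{(k_l)})| ,$$
and the last term vanishes almost everywhere because $\vec f^{(k_l)}$ belongs to the set. Letting $l\to\infty$ forces the left-hand side, which is independent of $l$, to be $0$ almost everywhere; hence $A_N(\vec f)$ converges almost everywhere, so $\vec f$ is in the set and the set is closed.

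The only genuinely delicate point is the multilinear maximal inequality: extracting a strong $L^{1/\theta}$-bound, rather than a mere weak-type or $L^1$ statement, from the one-variable inequality of Theorem \ref{thm1} is possible precisely because the strict inequality $\sum_j1/q_j<1$ leaves the room to take the exponents $r_j=\theta q_j$, which place $|h_j|^{r_j}$ in $L^{1/\theta}$ with $1/\theta>1$; everything after that is the routine argument showing that a convergence set is closed, as in \cite{DL}. If some $g_j$ equals $e_G$ or has finite order, the corresponding one-variable maximal operator is trivially bounded on every $L^p$ with $p\ge1$, so that case needs no separate treatment.
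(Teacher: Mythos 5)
Your argument is correct and is precisely the route the paper intends: the paper omits the proof of Theorem \ref{thm3}, deferring to the ideas of Corollary 2.2 of \cite{DL}, and your write-up supplies exactly that argument (H\"older with the exponents $r_j=\theta q_j$ to upgrade the single-average maximal inequality of Theorem \ref{thm1} to a multilinear $L^{1/\theta}$ maximal bound, followed by the standard telescoping and a.e.\ subsequence argument for closedness). No gaps; the final remark about $g_j=e_G$ is unnecessary since Theorem \ref{thm1} already covers arbitrary measure-preserving transformations.
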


	\section{The proof of Theorem \ref{T1}}
	First, we introduce a lemma that will be used in the proof.
	\begin{lemma}\label{lem1}
		$($\cite[Theorem 5.1.2]{C}$)$Let $X_n(n\ge 1)$ be a sequence of random variables of probability space $(\Omega,\mathcal{D},\nu)$ with $\mathbb{E}X_n=0$ and $\mathbb{E}|X_{n}|^2\le M$ where $M>0$. And for any $n,m\in \N$ with $m\neq n$, $\mathbb{E}X_nX_m=0$. Then $\frac{1}{n}S_n\rightarrow 0$ almost everywhere as $n\rightarrow \infty$ where $S_n=X_1+\cdots+X_n$.
	\end{lemma}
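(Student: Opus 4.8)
\textbf{Proof proposal for Lemma \ref{lem1}.} This is a version of the strong law of large numbers for orthogonal sequences, and the plan is the classical square-subsequence argument. First I would isolate the only consequence of the hypotheses that is used: since $\mathbb{E}X_n=0$ and $\mathbb{E}X_nX_m=0$ for $n\neq m$, expanding the square gives $\mathbb{E}|S_n|^2=\sum_{k=1}^{n}\mathbb{E}|X_k|^2\le Mn$ for every $n\ge 1$. In particular $\mathbb{E}|S_n/n|^2\le M/n\to 0$, so $S_n/n\to 0$ in $L^2(\nu)$; but this is not yet the almost everywhere statement, which requires a little more.

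Second, I would pass to the subsequence $n=k^2$. From $\mathbb{E}|S_{k^2}/k^2|^2\le Mk^2/k^4=M/k^2$ we get $\sum_{k\ge 1}\mathbb{E}|S_{k^2}/k^2|^2\le M\sum_{k\ge 1}k^{-2}<\infty$, so by the monotone convergence theorem $\sum_{k\ge 1}|S_{k^2}/k^2|^2<\infty$ $\nu$-almost everywhere; in particular $S_{k^2}/k^2\to 0$ almost everywhere.

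Third comes the heart of the argument: controlling the oscillation of $S_n$ between consecutive squares. For each $k\ge 1$ set $D_k=\max_{k^2\le n<(k+1)^2}|S_n-S_{k^2}|$. The trivial inequality $\max_j|a_j|^2\le\sum_j|a_j|^2$ yields $D_k^2\le\sum_{n=k^2}^{(k+1)^2-1}|S_n-S_{k^2}|^2$, and orthogonality again gives $\mathbb{E}|S_n-S_{k^2}|^2=\sum_{j=k^2+1}^{n}\mathbb{E}|X_j|^2\le M(n-k^2)\le 2Mk$ whenever $k^2\le n<(k+1)^2$. Since this range contains exactly $2k+1$ integers, $\mathbb{E}D_k^2\le (2k+1)\cdot 2Mk\le 6Mk^2$, hence $\sum_{k\ge 1}\mathbb{E}|D_k/k^2|^2\le 6M\sum_{k\ge 1}k^{-2}<\infty$, and as in the previous step $D_k/k^2\to 0$ almost everywhere. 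Finally, given arbitrary $n\ge 1$, choose $k$ with $k^2\le n<(k+1)^2$; then $|S_n/n|\le (|S_{k^2}|+D_k)/k^2\le|S_{k^2}/k^2|+D_k/k^2$, and letting $n\to\infty$ (so $k\to\infty$) both terms tend to $0$ almost everywhere, which is the claim.

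I expect the only place needing care is the third step. The natural first instinct is to invoke a Rademacher–Menshov type maximal inequality to bound the oscillation $D_k$, but the quadratic spacing of the subsequence $\{k^2\}$ makes the crude estimate $\max\le(\text{sum of squares})$ already sufficient: the gap between $k^2$ and $(k+1)^2$ has length $\asymp k$, each of its partial sums has second moment $O(Mk)$, so $\mathbb{E}D_k^2=O(Mk^2)$ and the series $\sum_k k^{-4}\cdot k^2$ converges. Keeping the elementary counting honest ($2k+1$ indices, each contributing at most $2Mk$) is all that the argument requires.
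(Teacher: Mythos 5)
Your argument is correct and is precisely the classical square-subsequence proof of the strong law for orthogonal sequences with uniformly bounded second moments; the paper itself gives no proof, citing the result as Theorem 5.1.2 of the reference [C], where it is established by exactly this method. All the estimates check out: $\mathbb{E}|S_n|^2\le Mn$ from orthogonality, the Borel--Cantelli/monotone-convergence step along $n=k^2$, and the crude bound $\mathbb{E}D_k^2\le (2k+1)\cdot 2Mk\le 6Mk^2$ on the oscillation between consecutive squares, which suffices because $\sum_k k^2/k^4<\infty$.
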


	Next, we will provide a simple argument for a special case to express our basic idea of the proof. 
	\begin{prop}
			Under the assumption of Theorem \ref{T1}, let $A$ and $B$ be two measurable sets with positive measure. Then there exists a sub-$\sigma$-algebra $\mathcal{A}$ of $P_{\mu}(\Z^d)$ $$\frac{1}{N}\sum_{n=0}^{N-1}T_{1}^{3n^2} T_{2}^{8n^2}1_BT_{1}^{n^2} T_{2}^{-n^2}1_{A}-\frac{1}{N}\sum_{n=0}^{N-1}T_{1}^{3n^2} T_{2}^{8n^2}\mathbb{E}(1_B|\mathcal{A})T_{1}^{n^2} T_{2}^{-n^2}\mathbb{E}(1_A|\mathcal{A})\rightarrow 0$$ almost everywhere as $N\rightarrow \infty$.
	\end{prop}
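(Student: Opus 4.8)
\emph{Overview.} The plan is to take for $\mathcal A$ the remote algebraic past of the partition $\beta=\{A,A^{c}\}\vee\{B,B^{c}\}$ with respect to a conveniently chosen algebraic past $\Phi$ of $\Z^{d}$, to split $1_{A}$ and $1_{B}$ into their $\mathcal A$-conditional expectations plus telescoping martingale increments along the $\Phi$-filtration, and to prove that every resulting cross term has vanishing averages almost everywhere by exhibiting it as an average of a genuinely orthogonal bounded sequence, invoking Lemma \ref{lem1}, and handling the tail of the expansion with the maximal inequality of Theorem \ref{thm5}.

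\emph{Choice of $\Phi$ and of $\mathcal A$.} Write $h_{1}=(3,8,0,\dots,0)$ and $h_{2}=(1,-1,0,\dots,0)$ in $\Z^{d}$ and let $T^{g}$ denote the transformation attached to $g\in\Z^{d}$, so the two iterates in the statement are $T^{h_{1}n^{2}}$ and $T^{h_{2}n^{2}}$. Let $\Phi$ be the algebraic past of Example \ref{ex1} with weights $A_{1}=2$ and $A_{2}=\dots=A_{d}=1$; the primary linear functional $L(n_{1},\dots,n_{d})=2n_{1}+n_{2}+\dots+n_{d}$ then satisfies $L(h_{1})=14>L(h_{2})=1>0$, so $e_{\Z^{d}}<_{\Phi}h_{2}<_{\Phi}h_{1}$ and $kh_{1},kh_{2},k(h_{1}-h_{2})\in\Phi^{-1}$ for every integer $k\ge 1$. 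For $g\in\Z^{d}$ put $\mathcal F_{g}=\bigvee_{k\le_{\Phi}g}k\beta$; this is a $\Z^{d}$-equivariant increasing filtration ($T^{h}\mathcal F_{g}=\mathcal F_{g+h}$), and $\bigcap_{g\in\Z^{d}}\mathcal F_{g}=\bigwedge_{g\in\Phi\cup\{e_{\Z^{d}}\}}g\beta_{\Phi}$, which by Proposition \ref{prop1} is a sub-$\sigma$-algebra of $P_{\mu}(\Z^{d})$. We let $\mathcal A:=\bigcap_{g\in\Z^{d}}\mathcal F_{g}$, which is $\Z^{d}$-invariant and contained in every $\mathcal F_{g}$.

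\emph{Decomposition.} Fix a strictly $<_{\Phi}$-decreasing, coinitial sequence $e_{\Z^{d}}=g^{(0)}>_{\Phi}g^{(1)}>_{\Phi}\cdots$ in $(\Z^{d},<_{\Phi})$ (for instance $g^{(k)}=(-k,0,\dots,0)$); then $\bigcap_{k}\mathcal F_{g^{(k)}}=\mathcal A$, so, by the reverse martingale convergence theorem, for $w\in\{1_{A},1_{B}\}$ one has $w=\E(w|\mathcal A)+\sum_{k\ge 0}d_{k}(w)$ in $L^{2}(\mu)$ and almost everywhere, where $d_{k}(w)=\E(w|\mathcal F_{g^{(k)}})-\E(w|\mathcal F_{g^{(k+1)}})$ is $\mathcal F_{g^{(k)}}$-measurable with $\E(d_{k}(w)|\mathcal F_{g^{(k+1)}})=0$ and $\|d_{k}(w)\|_{\infty}\le 2$. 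Expanding $T^{h_{1}n^{2}}1_{B}\cdot T^{h_{2}n^{2}}1_{A}$ bilinearly along these decompositions, the term built from the two $\mathcal A$-parts is exactly $T^{h_{1}n^{2}}\E(1_{B}|\mathcal A)\cdot T^{h_{2}n^{2}}\E(1_{A}|\mathcal A)$. Hence it suffices to show that $\frac1N\sum_{n<N}T^{h_{1}n^{2}}\phi\cdot T^{h_{2}n^{2}}\psi\to 0$ almost everywhere whenever at least one of $\phi,\psi$ is one of the increments $d_{k}$, together with a uniform control of the tail of the increment expansion (which is where Theorem \ref{thm5} enters).

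\emph{Orthogonality, conclusion, and the main obstacle.} Consider such an average and suppose the $h_{1}$-slot carries $\phi=d_{k}(1_{B})$ (the case with the increment in the $h_{2}$-slot is symmetric, using $L(h_{2})>0$; a slot carrying an $\mathcal A$-part is $\mathcal F_{g}$-measurable for every $g$, which only helps). By equivariance of conditional expectation (Lemma \ref{lem}), $T^{h_{1}n^{2}}d_{k}(1_{B})$ is $\mathcal F_{g^{(k)}+h_{1}n^{2}}$-measurable with $\E(T^{h_{1}n^{2}}d_{k}(1_{B})|\mathcal F_{g^{(k+1)}+h_{1}n^{2}})=0$, while $T^{h_{2}n^{2}}\psi$ is $\mathcal F_{c+h_{2}n^{2}}$-measurable for a fixed $c$. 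Since $L(h_{1})>L(h_{2})>0$, there is an $n_{0}$, depending only on the finitely many increment levels in play, such that for all $n\ge n_{0}$ one has $c+h_{2}n^{2}<_{\Phi}g^{(k+1)}+h_{1}n^{2}$ and $g^{(k)}+h_{1}n^{2}<_{\Phi}g^{(k+1)}+h_{1}m^{2}$ for every $m>n$. Writing $X_{n}=T^{h_{1}n^{2}}d_{k}(1_{B})\cdot T^{h_{2}n^{2}}\psi$ for the $n$-th summand, the factor $T^{h_{2}n^{2}}\psi$ is $\mathcal F_{g^{(k+1)}+h_{1}n^{2}}$-measurable for $n\ge n_{0}$, so $\E(X_{n}|\mathcal F_{g^{(k+1)}+h_{1}n^{2}})=0$; likewise $X_{n}$ is $\mathcal F_{g^{(k+1)}+h_{1}m^{2}}$-measurable for $m>n\ge n_{0}$, so $\E(X_{n}X_{m})=0$; and the $X_{n}$ are uniformly bounded. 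By Lemma \ref{lem1}, $\frac1N\sum_{n<N}X_{n}\to 0$ almost everywhere (the finitely many indices below $n_{0}$ being harmless). Summing over the increments of level $<K$ and $<L$, bounding the remainders $\E(w|\mathcal F_{g^{(K)}})-\E(w|\mathcal A)$ by the maximal inequality of Theorem \ref{thm5} (applied to the single transformation $T_{1}^{3}T_{2}^{8}$, respectively $T_{1}T_{2}^{-1}$, with the polynomial $n^{2}$), whose $L^{2}$-norms tend to $0$, and then letting $K,L\to\infty$ along a subsequence along which these maximal functions tend to $0$ almost everywhere, one obtains the proposition. The delicate point is precisely this orthogonality bookkeeping: one must choose $\Phi$ so that, level by level, the indices $g^{(k)}+h_{1}n^{2}$ and $c+h_{2}n^{2}$ become and stay comparable and separated as $n$ grows — which is what $L(h_{1})>L(h_{2})>0$ secures — thereby converting the only-approximate orthogonality of the sequence $n\mapsto T^{h_{1}n^{2}}1_{B}\cdot T^{h_{2}n^{2}}1_{A}$ into the exact orthogonality that Lemma \ref{lem1} requires.
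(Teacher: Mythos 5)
Your proposal is correct and follows essentially the same route as the paper: the same algebraic past from Example \ref{ex1} (with weights making the leading linear functional nonzero on $h_1n^2$, $h_2n^2$ and $(h_1-h_2)n^2$), the same remote-past $\sigma$-algebra $\mathcal A$ placed inside $P_{\mu}(\Z^d)$ via Proposition \ref{prop1}, the same orthogonality-plus-Lemma \ref{lem1} mechanism after conditioning along the $\Phi$-ordered filtration, and the same combination of Theorem \ref{thm5} with reverse martingale convergence to absorb the approximation error. The only (immaterial) difference is that you expand both indicators into full martingale-increment series and treat every cross term, whereas the paper uses the two-term telescoping $1_B1_A-\mathbb{E}(1_B|\mathcal A)\mathbb{E}(1_A|\mathcal A)=1_B\big(1_A-\mathbb{E}(1_A|\mathcal A)\big)+\big(1_B-\mathbb{E}(1_B|\mathcal A)\big)\mathbb{E}(1_A|\mathcal A)$ and approximates $\mathbb{E}(\cdot|\mathcal A)$ by a single $\mathbb{E}(\cdot|\mathcal A_g)$.
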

	\begin{proof}
		Clearly, we know $\deg(n^2+2(-n^2))\ge 1$, $\deg(3n^2+2(8n^2))\ge 1$ and $\deg((3n^2-n^2)+2(8n^2-(-n^2)))\ge 1$. By Example \ref{ex1}, we know $\Phi=\{(n_1,\cdots,n_d)\in\Z^d:there\ exists\ j\in\{0,1,\cdots,d-1\}such\ that\ \sum_{l=1}^{d-k}A_{l}n_l=0\ for\ k=0,\cdots,j-1\ and\ \sum_{l=1}^{d-j}A_{l}n_l<0\}$ is an algebraic past of $\Z^d$ when $A_i=1$ for any $1\le i\le d$ with $i\neq 2$ and $A_2=2$. 
		
		To be convenient, we view $\vec{n}$ as $T_{1}^{n_1}\cdots T_{d}^{n_d}$ where $\vec{n}=(n_1,\cdots,n_d)\in \Z^d$. When $n>0$, we have $$(n^2,-n^2,0,\cdots,0)<_{\Phi}(3n^2,8n^2,0,\cdots,0).$$
		
		For any $g\in \Z^d$, $\mathcal{A}_{g}$ is a sub-$\sigma$-algebra generated by $\{h1_{A}:h\in \Z^d\ and\ g\le_{\Phi} h\}$ and $\{h1_{B}:h\in \Z^d\ and\ g\le_{\Phi} h\}$. Let $\mathcal{A}=\bigcap_{g\in \Z^d}\mathcal{A}_{g}.$ Note that $\Phi^{-1}$ is still an algebraic past of $\Z^{d}$. Use the definition of $\mathcal{A}$ and Proposition \ref{prop1}, we know that $\mathcal{A}\subset P_{\mu}(\Z^d).$
		
		Next, we verify that $$\lim\limits_{N\rightarrow \infty}\frac{1}{N}\sum_{n=0}^{N-1}1_B(T_{1}^{3n^2} T_{2}^{8n^2}x)(1_{A}-\mathbb{E}(1_A|\mathcal{A}_{g}))(T_{1}^{n^2} T_{2}^{-n^2}x)=0$$ almost everywhere for any $g\in \Phi^{-1}$.
		
		Let $\mathcal{J}_{n}=\mathcal{A}_{g+(n^2,-n^2,0,\cdots,0)}$. Let $X_n=1_B(T_{1}^{3n^2} T_{2}^{8n^2}x)(1_{A}-\mathbb{E}(1_A|\mathcal{A}_{g}))(T_{1}^{n^2} T_{2}^{-n^2}x)$. Then there exists $N\in \N$ such that when $n>N$, $(1_{A}-\mathbb{E}(1_A|\mathcal{A}_{g}))(T_{1}^{n^2} T_{2}^{-n^2}x)$ is $\mathcal{J}_{n+1}$-measurable and $1_B(T_{1}^{3n^2} T_{2}^{8n^2}x)$ is $\mathcal{J}_{n}$-measurable. 
		
		That is, when $n>N$, $X_n$ is $\mathcal{J}_{n+1}$-measurable. And $\mathbb{E}(X_n|\mathcal{J}_{n})=0$. 
		
		For any $n>m>N$, we have $$\mathbb{E}(X_nX_m)=\mathbb{E}(X_m\mathbb{E}(X_n|\mathcal{J}_{n}))=0.$$
		
		By Lemma \ref{lem1}, we get $$\lim\limits_{N\rightarrow \infty}\frac{1}{N}\sum_{n=0}^{N-1}1_B(T_{1}^{3n^2} T_{2}^{8n^2}x)(1_{A}-\mathbb{E}(1_A|\mathcal{A}_{g}))(T_{1}^{n^2} T_{2}^{-n^2}x)=0$$ almost everywhere.
		
		Likely, we have $$\lim\limits_{N\rightarrow \infty}\frac{1}{N}\sum_{n=0}^{N-1}(1_{B}-\mathbb{E}(1_B|\mathcal{A}_{g}))(T_{1}^{3n^2} T_{2}^{8n^2}x)\mathbb{E}(1_A|\mathcal{A})(T_{1}^{n^2} T_{2}^{-n^2}x)=0$$ almost everywhere for any $g\in \Phi^{-1}$.
		By Theorem \ref{thm5}, for any $g\in \Phi^{-1}$, we have 
		\begin{equation*}
		\begin{split}
		&\int \Big(\limsup_{N\rightarrow \infty}\Big|\frac{1}{N}\sum_{n=0}^{N-1}1_B(T_{1}^{3n^2} T_{2}^{8n^2}x)(1_{A}-\mathbb{E}(1_A|\mathcal{A}))(T_{1}^{n^2} T_{2}^{-n^2}x)\Big|\Big)d\mu\\ &\le \int \Big(\limsup_{N\rightarrow \infty}\Big|\frac{1}{N}\sum_{n=0}^{N-1}1_B(T_{1}^{3n^2} T_{2}^{8n^2}x)(1_{A}-\mathbb{E}(1_A|\mathcal{A}_{g}))(T_{1}^{n^2} T_{2}^{-n^2}x)\Big|\Big)d\mu\\&+\int \Big(\limsup_{N\rightarrow \infty}\Big|\frac{1}{N}\sum_{n=0}^{N-1}1_B(T_{1}^{3n^2} T_{2}^{8n^2}x)(\mathbb{E}(1_A|\mathcal{A}_{g})-\mathbb{E}(1_A|\mathcal{A}))(T_{1}^{n^2} T_{2}^{-n^2}x)\Big|\Big)d\mu\\ &\le c(d,2,2)||\mathbb{E}(1_A|\mathcal{A}_{g})-\mathbb{E}(1_A|\mathcal{A})||_{2}.
		\end{split}
		\end{equation*}
		
		By Theorem \ref{thm4}, we know that for $\mu$-a.e. $x\in X$,
		\begin{equation}\label{eq7}
		\lim_{N\rightarrow \infty}\frac{1}{N}\sum_{n=0}^{N-1}1_B(T_{1}^{3n^2} T_{2}^{8n^2}x)(1_{A}-\mathbb{E}(1_A|\mathcal{A}))(T_{1}^{n^2} T_{2}^{-n^2}x)=0.
		\end{equation}
		
		Likely, we can know that for $\mu$-a.e. $x\in X$,
		\begin{equation}\label{eq8}
		\lim\limits_{N\rightarrow \infty}\frac{1}{N}\sum_{n=0}^{N-1}(1_{B}-\mathbb{E}(1_B|\mathcal{A}))(T_{1}^{3n^2} T_{2}^{8n^2}x)\mathbb{E}(1_A|\mathcal{A})(T_{1}^{n^2} T_{2}^{-n^2}x)=0.
		\end{equation}
		
		Take sum for (\ref{eq7}) and (\ref{eq8}) and we have $$\frac{1}{N}\sum_{n=0}^{N-1}T_{1}^{3n^2} T_{2}^{8n^2}1_BT_{1}^{n^2} T_{2}^{-n^2}1_{A}-\frac{1}{N}\sum_{n=0}^{N-1}T_{1}^{3n^2} T_{2}^{8n^2}\mathbb{E}(1_B|\mathcal{A})T_{1}^{n^2} T_{2}^{-n^2}\mathbb{E}(1_A|\mathcal{A})\rightarrow 0$$ almost everywhere as $N\rightarrow \infty$. This finishes the proof.
	\end{proof}

		Clearly, the special case reflects a fact that we can use the polynomial ergodic averages that come from $(X,P_{\mu}(\Z^d),\mu)$ to consider the polynomial ergodic averages come from $(X,\B,\mu)$.

	Now, let us be back to the proof of Theorem \ref{T1}. Before this, we need a lemma.
	In the following lemma, we consider such situation: 
	
 Under the assumption of Theorem \ref{T1}. Let $f_1,\cdots,f_m\in L^{\infty}(X,\B,\mu)$ and fix them. Assume that $p_{i,j}(0)=0,1\le i\le d,1\le j\le m$. To be convenient, we view $\vec{n}$ as $T_{1}^{n_1}\cdots T_{d}^{n_d}$ where $\vec{n}=(n_1,\cdots,n_d)\in \Z^d$. Moreover, we assume that for each $1\le j\le m$, $(p_{1,j}(n),\cdots,p_{d,j}(n))$ is not a constant and for any $1\le k,l\le m$ with $k\neq l$, $(p_{1,k}(n)-p_{1,l}(n),\cdots,p_{d,k}(n)-p_{1,l}(n))$ is not a constant. Clearly, there exist $A_1,\cdots,A_d>0$ such that there exists $N_0\in \N$ such that when $n>N_0$, for each $1\le j\le m$, $$\deg\big(\sum_{i=1}^{d}A_i p_{i,j}(n)\big)\ge 1$$ and there exists $N_1\in \N$ such that when $n>N_1$, for any $1\le k,l\le m$ with $k\neq l$, $$\deg\big(\sum_{i=1}^{d}A_i(p_{i,k}(n)-p_{i,l}(n))\big)\ge 1.$$ 
	
	By Example \ref{ex1}, we know $\Phi=\{(n_1,\cdots,n_d)\in\Z^d:there\ exists\ j\in\{0,1,\cdots,d-1\}such\ that\ \sum_{l=1}^{d-k}A_{l}n_l=0\ for\ k=0,\cdots,j-1\ and\ \sum_{l=1}^{d-j}A_{l}n_l<0\}$ is an algebraic past of $\Z^d$. To be convenient, suppose that $$(p_{1,l}(n),\cdots,p_{d,l}(n)) <_{\Phi}(p_{1,k}(n),\cdots,p_{d,k}(n))$$ for any $1\le k,l\le m$ with $k<l$ when $n>N_2$ where $N_2\in \N$. For any $g\in \Z^d$, $\mathcal{A}_{g}$ is a sub-$\sigma$-algebra generated by $\{hf_l:1\le l\le d,\ h\in \Z^d\ and\ g\le_{\Phi} h\}$. Let $\mathcal{A}=\bigcap_{g\in \Z^d}\mathcal{A}_{g}.$
	 \begin{lemma}\label{prop3}
	 	For any $ j\in\{1,\cdots,m\}$, we have
	 	\begin{equation}\label{eq2}
	 	\begin{split}
	 	\frac{1}{N}\sum_{n=0}^{N-1}&\prod_{k=1}^{j-1}f_k(T_{1}^{p_{1,k}(n)}\cdots T_{d}^{p_{d,k}(n)}x)(f_j-\mathbb{E}(f_j|\mathcal{A}))(T_{1}^{p_{1,j}(n)}\cdots T_{d}^{p_{d,j}(n)}x)\cdot\\&\ \ \ \ \ \ \ \ \prod_{l=j+1}^{m}\mathbb{E}(f_{l}|\mathcal{A})(T_{1}^{p_{1,l}(n)}\cdots T_{d}^{p_{d,l}(n)}x)\rightarrow 0
	 	\end{split}
	 	\end{equation}
	 	almost everywhere as $N\rightarrow \infty$.
	  \end{lemma}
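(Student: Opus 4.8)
The plan is to prove \eqref{eq2}, for a fixed $j\in\{1,\dots,m\}$, by the two-step scheme used for the special case above. Throughout write $\vec p_k(n)=(p_{1,k}(n),\dots,p_{d,k}(n))$, viewed, as in the statement, as the transformation $T_1^{p_{1,k}(n)}\cdots T_d^{p_{d,k}(n)}$; let $Y_n$ be the $n$-th summand of \eqref{eq2}, and let $Y_n^{(g)}$ be obtained from $Y_n$ by replacing its middle factor $(f_j-\mathbb{E}(f_j|\mathcal{A}))$ with $(f_j-\mathbb{E}(f_j|\mathcal{A}_g))$, keeping the factors $\mathbb{E}(f_l|\mathcal{A})$ with $l>j$ unchanged. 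The first step is to prove $\frac1N\sum_{n=0}^{N-1}Y_n^{(g)}\to0$ almost everywhere for every $g\in\Phi^{-1}$, using the orthogonal-series criterion (Lemma \ref{lem1}); the second step lets $g$ run to infinity in $\Phi^{-1}$, controlling the resulting error by the maximal inequality (Theorem \ref{thm5}) and the martingale convergence theorem (Theorem \ref{thm4}).

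For the first step, fix $g\in\Phi^{-1}$. First note that $\mathcal{A}$ is $\Z^d$-invariant, since $v\mathcal{A}=v\bigcap_{h\in\Z^d}\mathcal{A}_h=\bigcap_{h\in\Z^d}\mathcal{A}_{h+v}=\mathcal{A}$ for every $v\in\Z^d$; in particular every factor $\mathbb{E}(f_l|\mathcal{A})(\vec p_l(n)x)$ with $l>j$ is $\mathcal{A}$-measurable, hence $\mathcal{A}_h$-measurable for all $h$. Now fix $r\in\{0,\dots,K-1\}$, where $K\in\N$ is a large modulus chosen below (depending on $g$), and put $X_i:=Y^{(g)}_{iK+r}$ and $\mathcal{J}_i:=\mathcal{A}_{g+\vec p_j(iK+r)}$. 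Since $\sum_{l=1}^d A_l p_{l,j}$ is a nonconstant polynomial, $\vec p_j(iK+r)$ is eventually strictly $<_\Phi$-monotone in $i$, so $(\mathcal{J}_i)_i$ is eventually monotone for inclusion; and $\mathbb{E}|X_i|^2\le M<\infty$ because all the functions involved are bounded. I claim that for all large $i$: (a) $\mathbb{E}(X_i|\mathcal{J}_i)=0$; and (b) $X_i$ is measurable with respect to the larger of $\mathcal{J}_{i-1}$ and $\mathcal{J}_{i+1}$. Given (a) and (b), the monotonicity of $(\mathcal{J}_i)$ yields $\mathbb{E}X_i=0$ and $\mathbb{E}(X_iX_{i'})=0$ for $i\ne i'$, so Lemma \ref{lem1} (applied to the sequence with its finitely many small indices discarded) gives $\frac1M\sum_{i=0}^{M-1}X_i\to0$ a.e.; summing over $r\in\{0,\dots,K-1\}$ then gives $\frac1N\sum_{n=0}^{N-1}Y^{(g)}_n\to0$ a.e.

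To verify (a): the $l>j$ factors of $X_i$ are $\mathcal{A}$-measurable, hence $\mathcal{J}_i$-measurable, and pull out; by hypothesis $\sum_l A_l(p_{l,k}-p_{l,j})$ is a nonconstant polynomial which the ordering assumption forces to be positive at the top level of $<_\Phi$, so its value at $iK+r$ tends to $+\infty$ and hence $\vec p_k(iK+r)\ge_\Phi g+\vec p_j(iK+r)$ for large $i$, so the $k<j$ factors pull out too; what remains is $\mathbb{E}\big(\vec p_j(iK+r)(f_j-\mathbb{E}(f_j|\mathcal{A}_g))|\mathcal{A}_{g+\vec p_j(iK+r)}\big)$, which vanishes because $\mathbb{E}(f_j-\mathbb{E}(f_j|\mathcal{A}_g)|\mathcal{A}_g)=0$ while, by Lemma \ref{lem}, the action of $\vec p_j(iK+r)$ carries $\mathcal{A}_g$ onto $\mathcal{A}_{g+\vec p_j(iK+r)}$. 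To verify (b): every factor of $X_i$ other than $\vec p_j(iK+r)f_j$ is already $\mathcal{J}_i$-measurable (the $l>j$ ones by invariance, the $k<j$ ones by the growing gaps just used, and $\vec p_j(iK+r)\mathbb{E}(f_j|\mathcal{A}_g)\in\mathcal{A}_{g+\vec p_j(iK+r)}=\mathcal{J}_i$), hence measurable with respect to the larger of $\mathcal{J}_{i-1},\mathcal{J}_{i+1}$; so the only point is that the position $\vec p_j(iK+r)$ lie on the correct side of $g+\vec p_j((i\pm1)K+r)$ — equivalently, that the top-level value of the $K$-step of $\sum_l A_l p_{l,j}$ overtake the fixed vector $g$, which, since that $K$-step has absolute value at least $K$ times the (nonzero) leading coefficient and grows if the degree is $\ge2$, is guaranteed by taking $K$ large.

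For the second step, bounding every factor of $Y_n-Y^{(g)}_n$ except $\vec p_j(n)(\mathbb{E}(f_j|\mathcal{A}_g)-\mathbb{E}(f_j|\mathcal{A}))$ by a constant $C=C(\|f_1\|_\infty,\dots,\|f_m\|_\infty)$ gives the pointwise bound $\big|\frac1N\sum_{n=0}^{N-1}(Y_n-Y^{(g)}_n)\big|\le C\cdot\frac1N\sum_{n=0}^{N-1}\vec p_j(n)\,\big|\mathbb{E}(f_j|\mathcal{A}_g)-\mathbb{E}(f_j|\mathcal{A})\big|$, so Theorem \ref{thm5} with $p=2$ (applicable since $\Z^d$ is abelian, hence nilpotent of step at most two) bounds $\big|\big|\limsup_{N\to\infty}\big|\frac1N\sum_{n=0}^{N-1}(Y_n-Y^{(g)}_n)\big|\big|\big|_1$ by a constant multiple of $\|\mathbb{E}(f_j|\mathcal{A}_g)-\mathbb{E}(f_j|\mathcal{A})\|_2$. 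Letting $g\to\infty$ along a cofinal sequence in $\Phi^{-1}$, the $\sigma$-algebras $\mathcal{A}_g$ decrease to $\mathcal{A}=\bigcap_{h}\mathcal{A}_h$, so Theorem \ref{thm4} sends this quantity to $0$; together with the first step this forces $\limsup_{N\to\infty}\big|\frac1N\sum_{n=0}^{N-1}Y_n\big|=0$ a.e., which is \eqref{eq2}. The step I expect to be the main obstacle is the bookkeeping in (b): one cannot in general take $K=1$, because the positions $\vec p_j(n)$ may drift toward $+\infty$ or toward $-\infty$ for $<_\Phi$, and for linear $p_{i,j}$ only by a bounded amount per unit step, so a fixed $g\in\Phi^{-1}$ need not be overtaken in one step — passing to arithmetic subsequences of large spacing $K$ and checking that $(\mathcal{J}_i)$ is then the correct monotone filtration for Lemma \ref{lem1} is the technical heart, the rest being a routine adaptation of the special case.
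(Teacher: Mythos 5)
Your proposal is correct and follows essentially the same route as the paper's proof: the same reduction of $\mathbb{E}(f_j|\mathcal{A})$ to $\mathbb{E}(f_j|\mathcal{A}_{g})$ via Theorem \ref{thm5} and Theorem \ref{thm4}, and the same passage to arithmetic subprogressions of large modulus (your $K$, the paper's $Q$) so that the filtration $\mathcal{A}_{g+\vec p_j(\cdot)}$ becomes eventually monotone and Lemma \ref{lem1} applies to the resulting orthogonal sequence. The only differences are cosmetic (the order of the two steps is swapped, and you spell out the $\Z^d$-invariance of $\mathcal{A}$ more explicitly).
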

	\begin{proof}
		For any $\epsilon>0$, there exists $g_{j_0}\in \Phi^{-1}$ such that $||\mathbb{E}(f_j|\mathcal{A}_{g_{j_0}})-\mathbb{E}(f_j|\mathcal{A})||_{2}<\epsilon$ by Theorem \ref{thm4}. By Theorem \ref{thm5}, let $\tilde{d}=\max\{\deg p_{i,j}(n):1\le i\le d\}$, then we have
		\begin{equation*}
		\begin{split}
		& \int \Big(\limsup\limits_{N\rightarrow \infty}\Big|\frac{1}{N}\sum_{n=0}^{N-1}\prod_{k=1}^{j-1}f_k(T_{1}^{p_{1,k}(n)}\cdots T_{d}^{p_{d,k}(n)}x) (\mathbb{E}(f_j|\mathcal{A}_{g_{j_0}})-\mathbb{E}(f_j|\mathcal{A}))\\&(T_{1}^{p_{1,j}(n)}\cdots T_{d}^{p_{d,j}(n)}x)
		\prod_{l=j+1}^{m}\mathbb{E}(f_{l}|\mathcal{A}))(T_{1}^{p_{1,l}(n)}\cdots T_{d}^{p_{d,l}(n)}x)\Big|\Big)d\mu\\
		&\le \prod_{k=1}^{j-1}||f_k||_{\infty}\Big|\Big|\sup_{N\ge 1}\frac{1}{N}\sum_{n=0}^{N-1}|\mathbb{E}(f_j|\mathcal{A}_{g_{j_0}})-\mathbb{E}(f_j|\mathcal{A})|(T_{1}^{p_{1,j}(n)}\cdots T_{d}^{p_{d,j}(n)}x)\Big|\Big|_{2}\cdot\\&\prod_{l=j+1}^{m}||f_{l}||_{\infty}
		\le \prod_{k=1}^{j-1}||f_k||_{\infty} c(d,\tilde{d},2)\epsilon\prod_{l=j+1}^{m}||f_{l}||_{\infty}.
		\end{split}
		\end{equation*}
		
		So we only need to verify that as $N\rightarrow \infty$, for $\mu$-a.e. $x\in X$,
		\begin{equation}\label{eq3}
		\begin{split}
		\frac{1}{N}\sum_{n=0}^{N-1}&\prod_{k=1}^{j-1}f_k(T_{1}^{p_{1,k}(n)}\cdots T_{d}^{p_{d,k}(n)}x)(f_j-\mathbb{E}(f_j|\mathcal{A}_{g_{j_0}}))(T_{1}^{p_{1,j}(n)}\cdots T_{d}^{p_{d,j}(n)}x)\cdot\\&\ \ \ \ \ \ \ \ \prod_{l=j+1}^{m}\mathbb{E}(f_{l}|\mathcal{A})(T_{1}^{p_{1,l}(n)}\cdots T_{d}^{p_{d,l}(n)}x)\rightarrow 0.
		\end{split}
		\end{equation}
		
		Let $K=\max\{N_2,N_1,N_0\}$. There exists $L_j\in \N$ with $L_j>K$ and $Q\in \N$ such that when $n>L_j$, we have one of the following two equalites: $$g_{j_0}+(p_{1,j}((n-1)Q+i),\cdots,p_{d,j}((n-1)Q+i))<_{\Phi}(p_{1,j}(nQ+i),\cdots,p_{d,j}(nQ+i)),$$ 
 $$g_{j_0}+(p_{1,j}((n+1)Q+i),\cdots,p_{d,j}((n+1)Q+i))<_{\Phi}(p_{1,j}(nQ+i),\cdots,p_{d,j}(nQ+i))$$ for any $0\le i\le Q-1$ since when $n>N_0$, $\deg\big(\sum_{i=1}^{d}A_i p_{i,j}(n)\big)\ge 1$. Choose any $0\le i\le Q-1$ and fix it.
		Let 
		\begin{equation*}
		\begin{split}
		&\ \ \ \ \ \ \ \ X_{j,n}=\prod_{k=1}^{j-1}f_k(T_{1}^{p_{1,k}(nQ+i)}\cdots T_{d}^{p_{d,k}(nQ+i)}x)(f_j-\mathbb{E}(f_j|\mathcal{A}_{g_{j_0}}))\\&(T_{1}^{p_{1,j}(nQ+i)}\cdots T_{d}^{p_{d,j}(nQ+i)}x) \prod_{l=j+1}^{m}\mathbb{E}(f_{l}|\mathcal{A})(T_{1}^{p_{1,l}(nQ+i)}\cdots T_{d}^{p_{d,l}(nQ+i)}x).
		\end{split}
		\end{equation*}
		For any $n> L_j$, we define $\mathcal{J}_{j,n}=\mathcal{A}_{g_{j_0}+(p_{1,j}(nQ+i),\cdots,p_{d,j}(nQ+i))}$. 

		Clearly, $$\mathbb{E}(f_{l}|\mathcal{A})(T_{1}^{p_{1,l}(nQ+i)}\cdots T_{d}^{p_{d,l}(nQ+i)}x)$$ is $\mathcal{A}$-measurable for $l>j$.  And $$(f_j-\mathbb{E}(f_j|\mathcal{A}_{g_{j_0}}))(T_{1}^{p_{1,j}(nQ+i)}\cdots T_{d}^{p_{d,j}(nQ+i)}x)$$ is $\mathcal{J}_{j,n-1}(\mathcal{J}_{j,n+1})$-measurable. For $l<j$, there exists $M_j\in \N$ with $M_j>L_j$ such that if $n>M_j$, $f_l(T_{1}^{p_{1,l}(nQ+i)}\cdots T_{d}^{p_{d,l}(nQ+i)}x)$ is $\mathcal{J}_{j,n}$-measurable since when $n>K$, for any $1\le k\le j-1$, $$\deg\big(\sum_{i=1}^{d}A_i(p_{i,k}(n)-p_{i,j}(n))\big)\ge 1$$ and  $$(p_{1,j}(n),\cdots,p_{d,j}(n)) <_{\Phi}(p_{1,k}(n),\cdots,p_{d,k}(n)).$$
		
		To sum up, when $n>M_j$, $X_{j,n}$ is $\mathcal{J}_{n-1}(\mathcal{J}_{n+1})$-measurable. Use that fact that $$(p_{1,j}(nQ+i),\cdots,p_{d,j}(nQ+i))^{-1}\mathcal{A}_{g_{j_0}}=\mathcal{J}_{j,n}$$ and Lemma \ref{lem}, we know
		\begin{equation*}
		\begin{split}
		&\mathbb{E}((p_{1,j}(nQ+i),\cdots,p_{d,j}(nQ+i))\mathbb{E}(f_j|\mathcal{A}_{g_{j_0}})|\mathcal{J}_{j,n})\\&=\mathbb{E}((p_{1,j}(nQ+i),\cdots,p_{d,j}(nQ+i))f_j|\mathcal{J}_{j,n}).
		\end{split}
		\end{equation*}
		 Then $\mathbb{E}(X_{j,n}|\mathcal{J}_{j,n})=0.$ For any $n>m>M_j$, we have $$\mathbb{E}(X_{j,n}X_{j,m})=\mathbb{E}(X_{j,n}\mathbb{E}(X_{j,m}|\mathcal{J}_{j,m}))=0$$
			or
 $$\mathbb{E}(X_{j,n}X_{j,m})=\mathbb{E}(X_{j,m}\mathbb{E}(X_{j,n}|\mathcal{J}_{j,n}))=0.$$
		
		Note the fact that for $\mu$-a.e. $x\in X$, we have 
		\begin{equation*}
			\begin{split}
			&\limsup_{N\rightarrow \infty}\Big|\frac{1}{N}\sum_{n=0}^{N-1}\prod_{k=1}^{j-1}f_k(T_{1}^{p_{1,k}(n)}\cdots T_{d}^{p_{d,k}(n)}x)(f_j-\mathbb{E}(f_j|\mathcal{A}_{g_{j_0}}))(T_{1}^{p_{1,j}(n)}\cdots T_{d}^{p_{d,j}(n)}x)\cdot\\& \ \ \ \ \ \ \ \ \ \ \prod_{l=j+1}^{m}\mathbb{E}(f_{l}|\mathcal{A})(T_{1}^{p_{1,l}(n)}\cdots T_{d}^{p_{d,l}(n)}x)\Big|\\&=\limsup_{N\rightarrow \infty}\Big|\frac{1}{Q}\sum_{i=0}^{Q-1}\frac{1}{N}\sum_{n=0}^{N-1}\prod_{k=1}^{j-1}f_k(T_{1}^{p_{1,k}(nQ+i)}\cdots T_{d}^{p_{d,k}(nQ+i)}x)(f_j-\mathbb{E}(f_j|\mathcal{A}_{g_{j_0}}))\\&\ \ \ \ (T_{1}^{p_{1,j}(nQ+i)}\cdots T_{d}^{p_{d,j}(nQ+i)}x)\cdot \prod_{l=j+1}^{m}\mathbb{E}(f_{l}|\mathcal{A})(T_{1}^{p_{1,l}(nQ+i)}\cdots T_{d}^{p_{d,l}(nQ+i)}x)\Big|.
			\end{split}
		\end{equation*}
		
		By Lemma \ref{lem1}, we can get (\ref{eq3}). This finishes the proof.
	\end{proof}
	
	Now, we are about to prove Theorem \ref{T1}.
	\begin{proof}[Proof of Theorem \ref{T1}]
		Without loss of genarality, we can assume $p_{i,j}(n)\in \Z[n],1\le i\le d,1\le j\le m$ and they satisfy the following properties:
		\begin{itemize}
			\item[(1)]For any $1\le i\le d,l\le j\le m$, $p_{i,j}(0)=0$.
			\item[(2)]For each $1\le j\le m$, $(p_{1,j}(n),\cdots,p_{d,j}(n))$ is not a constant and for any $1\le k,l\le m$ with $k\neq l$, $(p_{1,k}(n)-p_{1,l}(n),\cdots,p_{d,k}(n)-p_{1,l}(n))$ is not a constant.
			\item[(3)]There exist $A_1,\cdots,A_d>0$ such that there exists $N_0\in \N$ such that when $n>N_0$, for each $1\le j\le m$, $$\deg\big(\sum_{i=1}^{d}A_i p_{i,j}(n)\big)\ge 1$$ and there exists $N_1\in \N$\  such that when $n>N_1$, for any $1\le k,l\le m$ with $k\neq l$, $$\deg\big(\sum_{i=1}^{d}A_i(p_{i,k}(n)-p_{i,l}(n))\big)\ge 1.$$
			\item[(4)]Let $\Phi=\{(n_1,\cdots,n_d)\in\Z^d:there\ \  exists\ \  j\in\{0,1,\cdots,d-1\}such\ \  that\ \\\sum_{l=1}^{d-k}A_{l}n_l=0\ for\ k=0,\cdots,j-1\ and\ \sum_{l=1}^{d-j}A_{l}n_l<0\}$. By Example \ref{ex1}, we know it is an algebraic past of $\Z^d$. And $$(p_{1,l}(n),\cdots,p_{d,l}(n)) <_{\Phi}(p_{1,k}(n),\cdots,p_{d,k}(n))$$ for any $1\le k,l\le m$ with $k<l$ when $n>N_2$ where $N_2\in \N$. 
		\end{itemize}
		
		First, we prove the sufficiency.

		Let $f_j=1_{A_j}$ for each $1\le j \le m$ where $A_j\in \B$ and $0<\mu(A_j)$. For any $g\in \Z^d$, $\mathcal{A}_{g}$ is a sub-$\sigma$-algebra generated by $\{hf_l:1\le l\le d,\ h\in \Z^d\ and\ g\le_{\Phi} h\}$. Let $\mathcal{A}=\bigcap_{g\in \Z^d}\mathcal{A}_{g}.$ Note that $\Phi^{-1}$ is still an algebraic past of $\Z^{d}$. Use the definition of $\mathcal{A}$ and Proposition \ref{prop1}, we know $\mathcal{A}\subset P_{\mu}(\Z^d).$
		
	Use Lemma \ref{prop3} and take sum along $j$ for $1\le j \le m$, we get $$\frac{1}{N}\sum_{n=0}^{N-1}\prod_{j=1}^{m}f_j(T_{1}^{p_{1,j}(n)}\cdots T_{d}^{p_{d,j}(n)}x)-\frac{1}{N}\sum_{n=0}^{N-1}\prod_{j=1}^{m}\mathbb{E}(f_j|\mathcal{A})(T_{1}^{p_{1,j}(n)}\cdots T_{d}^{p_{d,j}(n)}x)\rightarrow 0$$ for $\mu$-a.e.$x\in X$ as $N\rightarrow \infty$. Use linear property, sufficient condition and Theorem \ref{thm6}, we know for any $ f_1,\cdots,f_m\in L^{\infty}(X,\B,\mu)$, we have $$\frac{1}{N}\sum_{n=0}^{N-1}\prod_{j=1}^{m}f_j(T_{1}^{p_{1,j}(n)}\cdots T_{d}^{p_{d,j}(n)}x)\rightarrow L(f_1,\cdots,f_m)$$ almost everywhere as $N\rightarrow \infty$ where $L(f_1,\cdots,f_m)\in L^{\infty}(X,\B,\mu)$.
	
	As for necessity, it is clear. This finishes the proof.
	\end{proof}
	\begin{rem}\label{rm1}
		In fact, we can describe the form of $L(f_1,\cdots,f_m)$ when $(X,\B,\mu,\Z^d)$ is a $K$-system.
		
		Next, we verify that $L(f_1,\cdots,f_m)=\prod_{j=1}^{m}\int f_jd\mu$ for any $f_j\in L^{\infty}(X,\B,\mu),1\le j\le m$.  Without loss of genarality, we can assume $f_j\ge 0$ for any $1\le j\le m$.
		
		Select simple function sequences $\{\phi_{j}^{k}\}_{k\ge 1},1\le j\le m$ such that $\phi_{j}^{k}\rightarrow f_j$ in $L^{2}(\mu)$ and $||f_j||_{\infty}\ge||\phi_{j}^{k}||_{\infty}$ for any $k\ge 1$. Let $I_{k}=\int\phi_{1}^{k}d\mu \cdots \int \phi_{m}^{k}d\mu$. Then $$\frac{1}{N}\sum_{n=0}^{N-1}\prod_{j=1}^{m}\phi_{j}^{k}(T_{1}^{p_{1,j}(n)}\cdots T_{d}^{p_{d,j}(n)}x)\rightarrow I_k$$ for $\mu$-a.e.$x\in X$ as $N\rightarrow \infty$. Since $f_1,\cdots,f_m\in L^{\infty}(\mu)$, there exists $K>0$ such that $$K>\max\{\prod_{1\le j\le m,j\neq k}||f||_{\infty}:1\le k \le m\}.$$ By Theorem \ref{thm5}, there exists $C>0$ such that $$||L(f_1,\cdots,f_m)-I_k||_{2}\le CK\sum_{j=1}^{m}||f_j-\phi_{j}^{k}||_{2}.$$ Then $L(f_1,\cdots,f_m)=\prod_{j=1}^{m}\int f_jd\mu$.
	\end{rem}
	
    \section{The proof of Theorem \ref{T2}}
     
    Before the proof, we need a lemma. In the following lemma, we consider such situation:
    
    Under the assumption of Theorem \ref{T2}. Let $f_1,\cdots,f_m\in L^{\infty}(X,\B,\mu)$ and fix them. Assume that $p_{j}(0)=0,1\le j\le m$. To be convenient, we view $\vec{n}$ as $T_{1}^{n_1}\cdots T_{d}^{n_d}$ where $\vec{n}=(n_1,\cdots,n_d)\in \Z^d$. Let $\vec{e_1},\cdots,\vec{e_d}$ denote the natural basis of $\Z^d$. That is, for any $1\le i\le d$, $\vec{e_i}=(n_1,\cdots,n_d)$ where $n_1=\cdots=n_{i-1}=n_{i+1}=\cdots=n_{d}=0$ and $n_i=1$. Moreover, we assume that for each $1\le j\le m$, $p_{j}(n)$ is not a constant and for any $1\le k<l\le m$, if $g(k)=g(l)$, then $p_{k}(n)-p_{l}(n)$ is not a constant. Clearly, there exist $A_1,\cdots,A_d>0$ such that there exists $N_0\in \N$ such that when $n>N_0$, for each $1\le j\le m$, $\deg\big(A_{g(j)}p_{j}(n) \big)\ge 1$ and there exists $N_1\in \N$ such that when $n>N_1$, for any $1\le k,l\le m$ with $k\neq l$, $\deg\big(A_{g(k)}p_{k}(n)-A_{g(l)}p_{l}(n) \big)\ge 1.$
    
    By Example \ref{ex1}, we know $\Phi=\{(n_1,\cdots,n_d)\in\Z^d:there\ exists\ j\in\{0,1,\cdots,d-1\}such\ that\ \sum_{l=1}^{d-k}A_{l}n_l=0\ for\ k=0,\cdots,j-1\ and\ \sum_{l=1}^{d-j}A_{l}n_l<0\}$ is an algebraic past of $\Z^d$. To be convenient, suppose that $p_{l}(n)\vec{e}_{g(l)} <_{\Phi}p_{k}(n)\vec{e}_{g(k)} $ for any $1\le k,l\le m$ with $k<l$ when $n>N_2$ where $N_2\in \N$. For any $g\in \Z^d$, $\mathcal{A}_{g}$ is a sub-$\sigma$-algebra generated by $\{hf_l:1\le l\le d,\ h\in \Z^d\ and\ g\le_{\Phi} h\}$. Let $\mathcal{A}=\bigcap_{g\in \Z^d}\mathcal{A}_{g}.$

    \begin{lemma}\label{prop2}
    	For any $ j\in\{1,\cdots,m\}$, we have
    	\begin{equation*}
    	\begin{split}
    	\frac{1}{N}\sum_{n=0}^{N-1}\prod_{k=1}^{j-1}f_k(T_{g(k)}^{p_k(a_n)}x)(f_j-\mathbb{E}(f_j|\mathcal{A}))(T_{g(j)}^{p_j(a_n)}x)\prod_{l=j+1}^{m}\mathbb{E}(f_{l}|\mathcal{A})(T_{g(l)}^{p_{l}(a_n)}x)\rightarrow 0
    	\end{split}
    	\end{equation*}
    	almost everywhere as $N\rightarrow \infty$.
    \end{lemma}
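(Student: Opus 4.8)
The plan is to follow the same template as the proof of Lemma \ref{prop3}, replacing the sequence $n$ with the prime sequence $a_n$ and invoking the prime-sequence maximal inequality (Theorem \ref{thm1}) together with the closedness statement (Theorem \ref{thm3}) in place of Theorem \ref{thm5} and Theorem \ref{thm6}. First I would fix $\epsilon>0$ and, using Theorem \ref{thm4} applied to the decreasing family $\{\mathcal{A}_g\}_{g\in\Phi^{-1}}$ with $\bigcap_{g}\mathcal{A}_g=\mathcal{A}$, choose $g_{j_0}\in\Phi^{-1}$ with $\|\mathbb{E}(f_j|\mathcal{A}_{g_{j_0}})-\mathbb{E}(f_j|\mathcal{A})\|_2<\epsilon$. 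Then, using Theorem \ref{thm1} (applied, say, to the single transformation $T_{g(j)}$ and the polynomial $p_j$) I would estimate the $L^1$-norm of the $\limsup$ of the average with $\mathbb{E}(f_j|\mathcal{A})$ replaced by $\mathbb{E}(f_j|\mathcal{A}_{g_{j_0}})$, bounding it by $\big(\prod_{k\neq j}\|f_k\|_\infty\big)c(2,p_j,T_{g(j)})\epsilon$. Letting $\epsilon\to0$ reduces the lemma to proving the statement with $\mathcal{A}$ replaced by $\mathcal{A}_{g_{j_0}}$ inside the $j$-th factor.

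Next I would set up the probabilistic argument exactly as in Lemma \ref{prop3}. Put $K=\max\{N_0,N_1,N_2\}$. Since $\deg\big(A_{g(j)}p_j(n)\big)\ge1$ for $n>N_0$, the sequence $\big(p_j(a_n)\vec{e}_{g(j)}\big)_{n}$ is eventually strictly $\Phi$-monotone along subsequences; more precisely, I would choose $L_j>K$ and $Q\in\N$ so that for $n>L_j$ and each $0\le i\le Q-1$ one of the two inequalities
\begin{equation*}
g_{j_0}+p_j(a_{(n-1)Q+i})\vec{e}_{g(j)}<_{\Phi}p_j(a_{nQ+i})\vec{e}_{g(j)},\qquad g_{j_0}+p_j(a_{(n+1)Q+i})\vec{e}_{g(j)}<_{\Phi}p_j(a_{nQ+i})\vec{e}_{g(j)}
\end{equation*}
holds. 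Fixing $0\le i\le Q-1$ and setting
\begin{equation*}
X_{j,n}=\prod_{k=1}^{j-1}f_k(T_{g(k)}^{p_k(a_{nQ+i})}x)\,(f_j-\mathbb{E}(f_j|\mathcal{A}_{g_{j_0}}))(T_{g(j)}^{p_j(a_{nQ+i})}x)\prod_{l=j+1}^{m}\mathbb{E}(f_l|\mathcal{A})(T_{g(l)}^{p_l(a_{nQ+i})}x),
\end{equation*}
and $\mathcal{J}_{j,n}=\mathcal{A}_{g_{j_0}+p_j(a_{nQ+i})\vec{e}_{g(j)}}$, I would check, exactly as in Lemma \ref{prop3}, that the factors with index $l>j$ are $\mathcal{A}$-measurable, the $j$-th factor is $\mathcal{J}_{j,n\pm1}$-measurable, and—using that $g(k)=g(j)$ forces $p_k(a_n)-p_j(a_n)$ to have positive degree and hence $p_j(a_n)\vec{e}_{g(j)}<_\Phi p_k(a_n)\vec{e}_{g(k)}$ for large $n$ when $g(k)\ne g(j)$ the ordering is automatic from the block structure of $\Phi$—that for $n>M_j$ (some $M_j>L_j$) each factor with index $k<j$ is $\mathcal{J}_{j,n}$-measurable. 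Thus $X_{j,n}$ is $\mathcal{J}_{j,n\pm1}$-measurable, while Lemma \ref{lem} gives $\mathbb{E}(X_{j,n}|\mathcal{J}_{j,n})=0$, so $\mathbb{E}(X_{j,n}X_{j,m})=0$ for $n\ne m$. Lemma \ref{lem1} then yields that $\frac1N\sum_{n}X_{j,n}\to0$ almost everywhere, and averaging over the $Q$ residues $i$ gives the desired convergence along the full prime sequence.

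The main obstacle is the bookkeeping needed to guarantee that the ordering and measurability relations among the factors hold \emph{simultaneously} for all large $n$ — in particular, because here the shifts live on possibly different coordinate axes $\vec{e}_{g(j)}$, one must make sure the chosen $A_1,\dots,A_d$ (hence the algebraic past $\Phi$ from Example \ref{ex1}) separate the families $\{p_j(a_n)\vec e_{g(j)}\}$ for distinct $j$ and also make each single family eventually $\Phi$-increasing along an arithmetic progression of indices. This is where the hypothesis ``$p_k(n)-p_l(n)$ non-constant whenever $g(k)=g(l)$'' is used, and one should note that when $g(k)\ne g(l)$ the block-triangular shape of $\Phi$ already forces the two families into distinct ``layers'' for large $n$, so no extra degree hypothesis is needed. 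Everything else is a verbatim transcription of the proof of Lemma \ref{prop3}, with Theorem \ref{thm1} and Theorem \ref{thm3} substituted for their non-prime counterparts; I would simply remark this at the end rather than repeating the identical computation.
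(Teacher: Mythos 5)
Your proposal follows essentially the same route as the paper's own proof: the same reduction via Theorem \ref{thm4} and the prime maximal inequality (Theorem \ref{thm1}) from $\mathcal{A}$ to $\mathcal{A}_{g_{j_0}}$, the same random variables $X_{j,n}$ along arithmetic progressions of indices with the same filtration $\mathcal{J}_{j,n}=\mathcal{A}_{g_{j_0}+p_j(a_{nQ+i})\vec{e}_{g(j)}}$, the same orthogonality argument via Lemma \ref{lem}, and the same conclusion from Lemma \ref{lem1}. The only deviation is cosmetic: the paper handles the separation of the families $\{p_k(a_n)\vec{e}_{g(k)}\}$ for all $k\neq l$ uniformly through the chosen weights $A_1,\dots,A_d$ and the hypothesis $\deg\big(A_{g(k)}p_k(n)-A_{g(l)}p_l(n)\big)\ge 1$, rather than treating the case $g(k)\neq g(l)$ separately via the layered structure of $\Phi$ as you suggest, but both resolutions of that bookkeeping are sound.
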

    \begin{proof}
    	$\forall \epsilon>0$, $\exists g_{j_0}\in \Phi^{-1}$ such that $||\mathbb{E}(f_j|\mathcal{A}_{g_{j_0}})-\mathbb{E}(f_j|\mathcal{A})||_{2}<\epsilon$ by Theorem \ref{thm4}. By Theorem \ref{thm1}, we have
    	\begin{equation*}
    	\begin{split}
    	& \int\Big(\limsup\limits_{N\rightarrow \infty}\Big|\frac{1}{N}\sum_{n=0}^{N-1}\prod_{k=1}^{j-1}T_{g(k)}^{p_k(a_n)}f_k T_{g(j)}^{p_j(a_n)}(\mathbb{E}(f_j|\mathcal{A}_{g_{j_0}})-\mathbb{E}(f_j|\mathcal{A}))\cdot\\&
    	\prod_{l=j+1}^{m}T_{g(l)}^{p_{l}(a_n)}\mathbb{E}(f_{l}|\mathcal{A})\Big|\Big)d\mu
    	\le \prod_{k=1}^{j-1}||f_k||_{\infty} c(2,p_j,T_{g(j)})\epsilon\prod_{l=j+1}^{m}||f_{l}||_{\infty}.
    	\end{split}
    	\end{equation*}
    	
    	So we only need to verify that as $N\rightarrow \infty$,
    	\begin{equation}\label{eq1}
    	\begin{split}
    	\frac{1}{N}\sum_{n=0}^{N-1}\prod_{k=1}^{j-1}f_k(T_{g(k)}^{p_k(a_n)}x) (f_j-\mathbb{E}(f_j|\mathcal{A}_{g_{j_0}}))(T_{g(j)}^{p_j(a_n)}x)\prod_{l=j+1}^{m}\mathbb{E}(f_{l}|\mathcal{A})(T_{g(l)}^{p_{l}(a_n)}x)\rightarrow 0
    	\end{split}
    	\end{equation}
    	almost everywhere. 
    	
    Let $K=\max\{N_0,N_1,N_2\}$. 
     There exists $L_j\in \N$ with $L_j>K$ and $Q\in \N$ such that when $n>L_j$, we have  $g_{j_0}+p_j(a_{(n-1)Q+i})\vec{e}_{g(j)}<_{\Phi}p_j(a_{nQ+i})\vec{e}_{g(j)}$  or $g_{j_0}+p_j(a_{(n+1)Q+i})\vec{e}_{g(j)}<_{\Phi}p_j(a_{nQ+i})\vec{e}_{g(j)}$ for any $0\le i\le Q-1$. Choose any $0\le i\le Q-1$ and fix it. 	Let  $$X_{j,n}=\prod_{k=1}^{j-1}f_k(T_{g(k)}^{p_k(a_{nQ+i})}x)(f_j-\mathbb{E}(f_j|\mathcal{A}_{g_{j_0}}))(T_{g(j)}^{p_j(a_{nQ+i})}x)\prod_{l=j+1}^{m}\mathbb{E}(f_{l}|\mathcal{A})(T_{g(l)}^{p_{l}(a_{nQ+i})}x).$$
    	For any $n>L_j$, we define $\mathcal{J}_{j,n}=\mathcal{A}_{g_{j_0}+p_j(a_{nQ+i})\vec{e}_{g(j)}}$. 
    	
    	Clearly, $\mathbb{E}(f_{l}|\mathcal{A}_j)(T_{g(l)}^{p_{l}(a_{nQ+i})}x)$ is $\mathcal{A}$-measurable for $l>j$. And $(f_j-\mathbb{E}(f_j|\mathcal{A}_{g_{j_0}}))\\(T_{g(j)}^{p_j(a_{nQ+i})}x)$ is $\mathcal{J}_{j,n-1}(\mathcal{J}_{j,n+1})$-measurable. For $l<j$, $T_{g(l)}^{p_l(a_{nQ+i})}f_l$ is $\mathcal{J}_{j,n}$-measurable since there exists $M_j\in \N$ with $M_j>L_j$ such that when $n>M_j$, $p_{j}(n)\vec{e}_{g(j)}+g_{j_0} <_{\Phi}p_{l}(n)\vec{e}_{g(l)} $ for any $1\le l\le j-1$. To sum up, when $n>M_j$, $X_{j,n}$ is $\mathcal{J}_{j,n-1}(\mathcal{J}_{j,n+1})$-measurable. Use that fact that $(p_j(a_{nQ+i})\vec{e}_{g(j)})^{-1}\mathcal{A}_{g_{j_0}}=\mathcal{J}_{j,n}$ and Lemma \ref{lem}, we know $$\mathbb{E}((p_j(a_{nQ+i})\vec{e}_{g(j)})\mathbb{E}(f_j|\mathcal{A}_{g_{j_0}})|\mathcal{J}_{j,n})=\mathbb{E}((p_j(a_{nQ+i})\vec{e}_{g(j)})f_j|\mathcal{J}_{j,n}).$$ So $\mathbb{E}(X_{j,n}|\mathcal{J}_{j,n})=0.$ When $n>m>M_j$, we have $$\mathbb{E}(X_{j,n}X_{j,m})=\mathbb{E}(X_{j,n}\mathbb{E}(X_{j,m}|\mathcal{J}_{j,m}))=0$$ or $$\mathbb{E}(X_{j,n}X_{j,m})=\mathbb{E}(X_{j,m}\mathbb{E}(X_{j,n}|\mathcal{J}_{j,n}))=0.$$ Note the fact that for $\mu$-a.e. $x\in X$, we have 
    	\begin{equation*}
    	\begin{split}
    	&\limsup_{N\rightarrow \infty}\Big|\frac{1}{N}\sum_{n=0}^{N-1}\prod_{k=1}^{j-1}f_k(T_{g(k)}^{p_k(a_n)}x) (f_j-\mathbb{E}(f_j|\mathcal{A}_{g_{j_0}}))(T_{g(j)}^{p_j(a_n)}x)\prod_{l=j+1}^{m}\mathbb{E}(f_{l}|\mathcal{A})(T_{g(l)}^{p_{l}(a_n)}x)\Big|\\&=\limsup_{N\rightarrow \infty}\Big|\frac{1}{Q}\sum_{i=0}^{Q-1}\frac{1}{N}\sum_{n=0}^{N-1}\prod_{k=1}^{j-1}f_k(T_{g(k)}^{p_k(a_{nQ+i})}x)(f_j-\mathbb{E}(f_j|\mathcal{A}_{g_{j_0}}))(T_{g(j)}^{p_j(a_{nQ+i})}x)\\&\ \ \ \  \prod_{l=j+1}^{m}\mathbb{E}(f_{l}|\mathcal{A})(T_{g(l)}^{p_{l}(a_{nQ+i})}x)\Big|.
    	\end{split}
    	\end{equation*}
    	
     By Lemma \ref{lem1}, we can get (\ref{eq1}). This finishes the proof.
    \end{proof}
	Based on the Lemma \ref{prop2} and Theorem \ref{thm3}, the proof of Theorem \ref{T2} is similar to the proof of Theorem \ref{T1}. 
	
	\begin{rem}
		a. The core of proof of Theorem \ref{T2} is the fact that prime sequence is strictly increasing. By this phenomenon, we know that if there exists relative maximal inequality for a strictly increasing(decrease) integer sequence, then one can build a result like Theorem \ref{T2} for this sequence.
		
		b. Under the assumption of Theorem \ref{T2} and $(X,\B,\mu,\Z^d)$ is a $K$-system. Repeat the similar argument of the Remark \ref{rm1}, we know the following result:
		
		If for each $1\le j\le m$, $p_{j}(n)$ is not a constant and for any $1\le k<l\le m$, if $g(k)=g(l)$, then $p_{k}(n)-p_{l}(n)$ is not a constant. Then for any $f_1,\cdots,f_m\in L^{\infty}(X,\B,\mu)$, $$\lim_{N\rightarrow \infty}\frac{1}{N}\sum_{n=0}^{N-1}\prod_{j=1}^{m}f_j(T_{g(j)}^{p_{j}(a_n)}x)=\prod_{j=1}^{m}\int f_jd\mu$$ almost everywhere.
	\end{rem}

   \section*{Acknowledgement}
  The author is supported by NNSF of China (11971455, 12031019, 12090012). The author's thanks go to Professor Wen Huang and Professor Song Shao for their suggestions.

\bibliographystyle{plain}
\bibliography{ref}

\end{document}